\newtheorem{Assumption}{{\bf Assumption}}[section]
\newtheorem{lemma}{Lemma}[section]
\newtheorem{proposition}{Proposition}[section]
\newtheorem{Remark}{{\bf Remark}}[section]
\newtheorem{definition}{{\bf Definition}}[section]
\newtheorem{theorem}{Theorem}[section]
\newtheorem{corollary}{Corollary}[section]
\newcommand{\tun}{\tau_1}
\newcommand{\td}{\tau_2}
\newcommand{\tdn}{\tau_{2n}}
\newcommand{\ttil}{\tilde{\tau}}
\newcommand{\E}{\mathbb{E}}
\newcommand{\fex}{f_1^{2n+1}}
\newcommand{\xiex}{\xi^{2n+1}}
\newcommand{\R}{\mathbb{R}}
\newcommand{\N}{\mathbb{N}}
\newcommand{\I}{\mathbb{I}}
\newcommand{\cf}{\mathcal{F}}
\newcommand{\ce}{\mathcal{E}}
\newcommand{\ct}{\mathcal{T}}
\newcommand{\stopt}{\mathcal{T}^d_{k,T}}
\newcommand{\stopo}{\mathcal{T}^d_{0,T}}
\newcommand{\F}{\mathbb{F}}
\newcommand{\secondhyp}{(A2) }
\newcommand{\thirdhyp}{(A3) }
\long\def\symbolfootnote[#1]#2{\begingroup\def\thefootnote{\fnsymbol{footnote}}
\footnote[#1]{#2}\endgroup}
\def\esssup{\text{ess sup}} 
\newcommand{\limn}{\lim_{n\rightarrow\infty}}
\newcommand{\ld}{\{0,\ldots, T\}}
\font\QEDlogofont=msam10 at 10pt
\def\QEDlogo{\hbox{\QEDlogofont\char'003}}
\def\QEDblogo{\hbox{\QEDlogofont\char'004}}
\newif\ifnologo\nologofalse
\newif\iflogo
\newif\ifblogo\blogofalse
\newif\iftopprhead\topprheadfalse
\def\prooffont{\normalsize}
\newenvironment{proof}{\par\addvspace{6pt plus2pt}
\par
\noindent\prooffont{\bf Proof\,:}\hskip6pt\ignorespaces}{%
   \ifblogo\hskip1.2pt
            \QEDblogo
   \else
   \ifnologo
   \else
   \hfill
            \QEDlogo
   \fi\fi
\par\addvspace{6pt plus2pt}\global\topprheadfalse}%
\begin{document}
\title{Optimal stopping and a non-zero-sum Dynkin game in discrete time with risk measures  induced by BSDEs}

\author{Miryana Grigorova\footnote{(Corresponding Author) Institute for Mathematics, Humboldt University-Berlin} \and Marie-Claire Quenez\footnote{Laboratoire de Probabilités et Modèles Aléatoires, Université Paris-Diderot}}

\date{}


\maketitle
 This is a preprint of an article whose final and definitive  form has been published  in Stochastics: An International Journal of Probability and Stochastic Processes, Volume 89, Issue 1, Pages 259-279 \copyright 
[2017] [copyright: Taylor \& Francis];  Stochastics is available online at: 
http://www.tandfonline.com/doi/full/10.1080/17442508.2016.1166505\\

\begin{center}
First version: 21 Aug 2015\\
Accepted: 13 Mar 2016\\
Published online: 01 Apr 2016
\end{center}
\begin{abstract}
We first study an optimal stopping problem in which a player (an agent)  uses a discrete stopping time in order to stop optimally a payoff process whose risk is  evaluated by a (non-linear) $g$-expectation. We then consider a non-zero-sum game on discrete stopping times  with two agents who aim at minimizing their respective risks.  The payoffs of the agents are assessed by $g$-expectations (with possibly different drivers for the different players). By using the results of the first part, combined with some ideas of S. Hamadène and J. Zhang, we construct a Nash equilibrium point of this game by a recursive procedure. Our results are obtained in the case of a standard Lipschitz driver $g$ without any additional assumption on the driver besides that ensuring the monotonicity of the corresponding $g$-expectation. 

{\bf Keywords:} optimal stopping, non-zero-sum Dynkin game, $g$-expectation, dynamic risk measure, game option,  Nash equilibrium 
\end{abstract}
%
%

\section{Introduction}
Initiated by Bismut \cite{Bismut1}, \cite{Bismut2} (in the linear case), the theory of backward stochastic differential equations (BSDEs for short) has been further developed by Pardoux and Peng \cite{Pape90} in their seminal paper. The theory of BSDEs has found a number of  applications in finance, among which pricing and hedging of European options, recursive utilities, risk measurement. BSDEs induce a  family of operators, the so-called $g$-conditional expectations, which have proved useful in  the literature on (non-linear) dynamic risk measures (cf., e.g.,  \cite{Pe04}, \cite{Gianin}). We recall that the $g$-conditional expectation at time $t\in[0,T]$ (where $T>0$ is a fixed time horizon, and $g$ is a Lipschitz driver) is the operator which maps a given terminal condition $\xi$ (where $\xi$ is a square-integrable random variable, measurable with respect to the information at time $T$) to the position at time $t$ of (the first component of) the solution of the BSDE with parameters $(g,\xi)$. This operator is denoted by $\mathcal{E}^g_{t,T}(\cdot).$ The operator $\mathcal{E}^g_{0,T}(\cdot)$ is called  $g$-expectation.\\ 
On the other hand, zero-sum Dynkin games have been introduced by Dynkin in  \cite{Dynkin} in the discrete-time framework. Since then, there have been lots of contributions to zero-sum Dynkin games both 
in discrete time and in continuous time (cf., e.g., \cite{Bismuth}, \cite{Neveu},  \cite{Alario}, \cite{Maingue}). A prominent financial example is given by the pricing problem of game options (also known as Israeli options), introduced by Kifer in \cite{Kifer}. 
Compared to the zero-sum case, there have been fewer works on the non-zero-sum case.  We can quote
\cite{Hamadene},  \cite{Laraki}, \cite{Hamadene2} in the continuous-time setting,  and
\cite{Morimoto}, \cite{Ohtsubo}, \cite{Solan1}, \cite{Hamadene3} in the discrete-time setting.
For a recent survey on zero-sum and non-zero-sum Dynkin games the reader is referred to \cite{Kifer-survey}. \\
In all the above references the players' payoffs are assessed by "classical" mathematical expectations. In the recent years, some authors 
(cf. \cite{DQS2}, and \cite{Bayraktar3}) have considered "generalized" Dynkin games in continuous time where 
the "classical" expectations are replaced by 
more general (non-linear) functionals. All these extensions  are limited to the zero-sum case.\\

In the present paper, we address a game problem with two "stoppers" whose profits (or payoffs) are assessed by non-linear dynamic risk measures and who aim at minimizing their risk. 
More concretely, the following situation is of interest to us:
we are given two  adapted processes $X$ and $Y$ with 
$X \leq Y$  and $X_T= Y_T$ a.s. 
We consider a   game option in discrete time, that is,  a contract between two "stoppers" (a seller and a buyer) who can act only
 at given times  $0=t_0< t_1 <\ldots< t_n=T$, where $n \in \N$. 
 The two agents  can thus choose their strategies only among the  discrete stopping times with values in the grid  $\{t_0, t_1, \ldots, t_n\}$. 
 We denote by $\ct^d_{0,T}$ this set of stopping times.
Recall that the game option gives the buyer the right  \textit{to exercise} at any (discrete) stopping  time 
$\tau_1 \in  \ct^d_{0,T}$ and the seller the right  \textit{to cancel} at any (discrete) stopping time  $\tau_2 \in \ct^d_{0,T}$. 
In financial terms, we could say that both the seller's cancellation strategy and the buyer's exercise strategy are of \emph{Bermudan type}.
 If the buyer exercises at time $\tau_1$ before the seller cancels, then the seller pays to the buyer the amount $ X_{\tau_1}$; otherwise, the buyer receives from the seller  
 the amount $ Y_{\tau_2}$ at the cancellation time $\tau_2$. The difference 
$Y_{\tau_2} - X_{\tau_2} \geq 0$ is interpreted as a penalty which the seller pays to the buyer in the case of an early cancellation of the contract. 
To summarize, if the seller chooses a cancellation time $\tau_2$ and the buyer chooses an exercise time $\tau_1$,  the former pays to the latter 
the payoff 
$$\, I(\tun, \td):=X_{\tun}\I_{\{\tun\leq \td\}} +Y_{\td}\I_{\{\td< \tun\}}$$
at time 
$\tau_1 \wedge \tau_2.$
 The seller's payoff at time 
$\tau_1 \wedge \tau_2$ is  equal to $-I(\tun, \td)$.

%
We emphasize that our aim here  is not to determine a "fair price" (or a "fair premium") for the game option, but rather to
determine an "equilibrium" for the game problem related to the risk minimization of both the seller and the buyer.    

The seller and the buyer are assumed  to evaluate  the risk of their payoffs in a (possibly) different manner.  
 
 
 The dynamic risk measure $\rho^{f_1}$ (resp. $\rho^{f_2}$) of the buyer (resp. the seller) is induced by a BSDE with driver $f_1$ (resp.  $f_2$). Up to a minus sign, $\rho^{f_1}$ (resp. $\rho^{f_2}$) corresponds  to the family of $f_1$-conditional expectations  (resp. $f_2$-conditional expectations). 

   If, at time $0$,  the buyer chooses $\tau_1$ as exercise time and the seller chooses $\tau_2$ as cancellation time, 
 the buyer's (resp. seller's) risk at time $0$ is thus given by
 \begin{equation*}
 \begin{aligned}
 \rho^{f_1}_{0,\tun\wedge \td  }(I(\tun, \td) )=- \ce^{f_1}_{0,\tun\wedge \td}(I(\tun, \td) ) \quad \\
 {\rm resp.}\quad 
 \rho^{f_2}_{0,\tun\wedge \td}(-I(\tun, \td))= - \ce^{f_2}_{0,\tun\wedge \td}( -I(\tun, \td)).
 \end{aligned}
 \end{equation*}
The goal of each of the agents is to minimize his/her risk.  
We are interested in finding an "equilibrium" pair of discrete stopping times $(\tau_1^*, \tau_2^*)$ for this problem, 
that is, a pair $(\tau_1^*, \tau_2^*)\in \ct^d_{0,T}\times \ct^d_{0,T}$ such that the first agent's risk attains its minimum at  $\tau_1^*$ when the strategy of the second one is fixed at $\tau_2^*$, and  
the second agent's risk  attains its minimum at  $\tau_2^*$ when the strategy of the first one is fixed at
 $\tau_1^*$.  
 In other words, we are looking for a pair $(\tau_1^*, \tau_2^*)\in \ct^d_{0,T}\times \ct^d_{0,T}$ satisfying 
\begin{equation*}
 \begin{aligned}
\max_{\tau_1 \in  \ct^d_{0,T}} \ce^{f_1}_{0,\tun\wedge \td^*  }(I(\tun, \td^*) )= \ce^{f_1}_{0,\tun^*\wedge \td^*  }(I(\tun^*, \td^*) ) \\
\,\,\max_{\tau_2 \in  \ct^d_{0,T}}\ce^{f_2}_{0,\tun^*\wedge \td  }(-I(\tun^*, \td) )
=\ce^{f_2}_{0,\tun^*\wedge \td^*  }(-I(\tun^*, \td^*) ).
 \end{aligned}
 \end{equation*}
 In the terminology of game theory, the above game problem is of a  \textit{non-zero-sum} type, and  a pair  $(\tau_1^*, \tau_2^*)$ satisfying the above properties corresponds to a {\em Nash equilibrium point} of this
 non-zero-sum game. This game problem can be seen as a "generalized" non-zero-sum
 Dynkin game problem (the term "generalized" refers to the fact that our problem involves non-linear expectations instead of classical expectations). 
Note that  in the trivial case where $f_1=f_2=0$, our game 
reduces to a classical zero-sum Dynkin game (with classical expectations). Let us also mention that  
we can easily incorporate  in our framework the situation where the seller and/or the buyer apply their respective risk measures to their  \textit{net gains} (that is the payoff minus the initial price of the game option) instead of to their \textit{payoffs}. If the initial price of the option is given by  $x$ (where $x>0$), the buyer's (resp. seller's) net gain at time $\tau_1\wedge \tau_2$ is given by   $I(\tau_1,\tau_2)-x$ (resp. $x-I(\tau_1,\tau_2)$).

We show that there exists a Nash equilibrium point for the  non-zero-sum Dynkin game problem  described above by using a constructive approach similar to that of \cite{Hamadene}, \cite{Hamadene2}, and \cite{Hamadene3}.  

This  approach  requires some results on optimal stopping with one agent. We are thus led  to considering first the following family of problems:
\begin{equation}\label{osp2_intro}
V(t_k):= ess\inf_{\tau\in \ct^d_{t_k,T}} \rho^g_{t_k,\tau}(\xi_\tau)=  -ess \sup_{\tau\in \ct^d_{t_k,T}} \ce^{g}_{t_k,\tau}(\xi_\tau), \text{ for all } k\in{0,1,\ldots,n},
\end{equation}
where $\ct^d_{t_k,T}$ denotes the set of discrete stopping  times valued in $\{t_k, \ldots, t_n  \}$ and where $\xi$ is a given square-integrable adapted process. 
We characterize the sequence of random variables $(V(t_k))_{k \in \N}$ via a backward recursive construction.
We  also show that the stopping time 
$\tau^*:=\tau^*(t_k):= \inf \{ t \in \{t_k, \ldots, t_n  \}, \, V(t)= \xi_{t} \}$, which belongs to $\ct^d_{t_k,T}$, is optimal for   \eqref{osp2_intro} at time $t_k$. To prove our results, we use a generalization of the martingale approach to the case of $g$-conditional expectations in discrete time. Our results are established  without any additional assumption on the driver $g$ besides that ensuring the monotonicity of the corresponding $g$-conditional expectations. In particular, we do not make an assumption of "groundedness" on $g$  (that is, the assumption $g(t,0,0,0)=0$),  nor do we make an assumption of concavity/convexity on $g$, nor  an assumption of "independence from $y$" (that is, $g(t,y,z,k)=g(t,y',z,k)$, for all $y,y'\in\R$), which are  sometimes  made in the literature.\\
Optimal stopping problems with one agent whose payoff is assessed by a non-linear expectation have been largely studied. A continuous-time version of Problem  \eqref{osp2_intro} has been considered in \cite{EQ}, \cite{Quenez-Sulem} and \cite{Grigorova}. Related works include, but are not limited to, \cite{Bayraktar-2}, \cite{Bayraktar}.
The discrete-time version of Problem \eqref{osp2_intro} has been introduced by Krätschmer and Schoenmakers in \cite{Schoe}, Example 2.7, who address the problem  under stronger assumptions on the driver $g$ 
than those
made in the present paper.  In particular, the authors of \cite{Schoe} need the zero-one law for $g$-expectation, and the property $\ce^{g}_{t,T}(\xi)=\xi$, for all $t$, for all $\xi$ square-integrable ${\cal F}_t$-measurable.
These two properties do not hold for a general Lipschitz driver $g$.
 For this reason, 
the results from  \cite{Schoe} are not applicable in our framework; thus, we have been led to studying  Problem \eqref{osp2_intro} by using different techniques.

The remainder of the paper is organized as follows:
In Section \ref{defin_sect}, we set the framework and the notation. Section \ref{sect_optimal} is dedicated to the optimal stopping problem with one player. In Subsection \ref{subsect1}, we define the notion of $g$-(super)martingales in discrete time and we give some of their properties; in Subsection \ref{subsect2}, we  characterize the value function of our optimal stopping problem and we show the existence of optimal stopping times.  In Section \ref{sect_Dynkin}, we formulate our non-zero-sum Dynkin game with two players and we show the existence of a Nash equilibrium.  In Section \ref{sect_develop}, we briefly comment upon possible extensions of our results. The Appendix contains a useful property of $g$-expectations (Prop. \ref{zero-one law general}), along with some related remarks, as well as the proof of an easy result from Section \ref{sect_optimal}.
 

\section{The framework}\label{defin_sect}
Let $T$ be a fixed positive real number. Let $(E, \mathcal{K})$ be a measurable space 
equipped with a $\sigma$-finite positive measure $\nu$.
Let $(\Omega,  {\cal F}, P)$ be a (complete) probability space equipped with a  one-dimensional Brownian motion $W$ and with an independent  Poisson random measure $N(dt,de)$ with compensator $dt\otimes\nu(de)$. 
We denote by $\tilde N(dt,de)$ the compensated process, i.e. $\tilde N(dt,de):=\tilde N(dt,de)-dt\otimes\nu(de).$
Let $\F = \{{\cal F}_t \colon t\in[0,T]\}$ 
be  the (complete) natural filtration associated with $W$ and $N$.\\ 
 We use the following notation: 
\begin{itemize}
\item 
 $L^2({\cal F}_T)$  is the set of random variables which are  ${\cal F} 
_T$-measurable and square-integrable.
\item $L^2_\nu$ is the set of $\mathcal{K}$-measurable functions $\ell:  E \rightarrow \R$ such that  $\|\ell\|_\nu^2:= \int_{ E}  |\ell(e) |^2 \nu(de) <  \infty.$
For $\ell\in L^2_\nu$, $k\in L^2_\nu$, we define $\langle \ell, {k}\rangle_\nu:=\int_E \ell(e){k} (e) \nu(de)$.
\item    $H^{2,T}$ is the set of 
real-valued predictable processes $\phi$ such that\\
 $\| \phi\|^2_{H^{2,T}} := E \left[(\int_0 ^T |\phi_t| ^2 dt)\right] < \infty.$ 

\item $H_{\nu}^{2,T}$ is  the set of real-valued processes $l: (\omega,t,e)\in\Omega\times[0,T] \times  E\mapsto l_t(\omega, e)\in\R$ which are {\em predictable}, that is $({\cal P} \otimes {\cal K})$-measurable,
and such that $$\| l \|^2_{H_{\nu}^{2,T}} :=E\left[ \int_0 ^T \|l_t\|_{\nu}^2 \,dt   \right]< \infty.$$




\item   ${\cal S}^{2,T}$ is the set of real-valued  RCLL adapted
 processes $\varphi$ such that\\
${\varphi}^2_{{\cal S}^{2,T}} := E(\sup_{t\in[0,T]} |\varphi_t |^2) <  \infty.$



\end{itemize}

We recall the following terminology from BSDE theory. 

\begin{definition}(Lipschitz driver, standard data)
A function $g$ is said to be a {\em driver} if the following two conditions hold: 
\begin{itemize}
\item (measurability) 
$g: \Omega \times [0,T]\times \R^2  \times L^2_\nu  \rightarrow \R $\\
$(\omega, t,y,z;\ell) \mapsto  g(\omega, t, y, z,\ell) $
  is $ {\cal P} \otimes {\cal B}(\R^2)  \otimes {\cal B}(L^2_\nu)
- $ measurable,  where  ${\cal P}$  is the predictable $\sigma$-algebra on $\Omega \times [0,T]$, ${\cal B}(\R^2)$ is 
the Borel   $\sigma$-algebra on $\R^2$, and ${\cal B}(L^2_\nu ) $ is the Borel 
$\sigma$-algebra on $L^2_\nu $.
\item (integrability) $E \left[(\int_0 ^T |g(t,0,0,0)| ^2 dt)\right]<\infty$.
\end{itemize} 
A driver $g$ is called a {\em Lipschitz driver} (or a \emph{standard Lipschitz driver}) if moreover there exists a constant $ K \geq 0$ such that $dP \otimes dt$-a.e.\,, 
for each $(y_1, z_1, \ell_1)\in\R^2 \times L^2_\nu$, $(y_2, z_2, \ell_2)\in\R^2 \times L^2_\nu$, 
$$|g(\omega, t, y_1, z_1,\ell_1) - g(\omega, t, y_2, z_2,\ell_2)| \leq 
K (|y_1 - y_2| + |z_1 - z_2|+  \|{\ell}_1 - {\ell}_2 \|_\nu).$$
A pair $(g,\xi)$ such that $g$ is a Lipschitz driver and $\xi\in L^2(\Omega,\cf_T,P)$ is called a \emph{pair of standard data}, or a \emph{pair of standard parameters}.
\end{definition}
Let $(\xi, g)$ be a pair of standard data. The BSDE associated with Lipschitz driver $g$, terminal time $T$, and terminal condition $\xi$, is formulated as follows:
\begin{equation}\label{BSDEeq}
Y_t=\xi+\int_t^T g(s,Y_s,Z_s,k_s) ds - \int_{t}^T Z_sdW_s -\int_{t}^T\int_E  k_s(e) \tilde N(ds,de), \text{ for } t\in[0,T].
\end{equation}
 We recall that the above BSDE admits a unique solution triplet $(Y,Z,k)$ in the space ${\cal S}^{2,T} \times H^{2,T} \times 
H_{\nu}^{2,T} $. 
We denote by $\ce^g_{\cdot,T}(\xi) $ the first component of the solution of that BSDE 
(i.e. $(\ce^g_{t,T}(\xi))_{t\in[0,T]}$ is the family of $g$-conditional evaluations of $\xi$ in the vocabulary of S. Peng).\\
Recall also (cf., e.g., El Karoui et al.  \cite{ElKaroui}) that if the terminal time is given by a stopping time $\tau\in\ct_{0,T}$ and if $\xi$ is $\cf_\tau$-measurable,  
the solution  of the BSDE associated with  terminal time $\tau$,  terminal condition $\xi$ and Lipschitz driver $g$ is defined as the solution of the BSDE with (fixed) terminal time $T$, terminal condition $\xi$ and Lipschitz driver 
$g^{\tau}$ given by 
$$g^{\tau} (t,y,z,\ell):= g(t,y,z,\ell)\I_{\{t\leq \tau\}}.$$
The first component of this solution is thus equal to $(\ce^{g^\tau}_{t,T}(\xi))_{t\in[0,T]}$. In the sequel it is also denoted by 
$(\ce^g_{t,\tau}(\xi))_{t\in[0,T]}$.
We have $\ce^g_{t,\tau}(\xi)=\xi$ a.s. on the set $\{t\geq \tau\}$.


Recall that  $T$ is interpreted as the final time horizon.  
 For each $ T' \in [0,T]$ and $ \eta \in L^2({\cal F}_ {T'})$, we set 
  \begin{equation}\label{definition}
  \rho^g _{t, T'}( \eta)  :=  -{\cal E}^g_{t, T'}( \eta) , \,\, \,\,\,0\leq t \leq  T', 
  \end{equation}
If $ T'$ represents a given maturity and $ \eta$ a financial position at time $ T'$,
then $\rho^g _{t, T'}( \eta)$ is interpreted as the risk of $ \eta$ at time 
$t$. The functional $\rho^g :  ( \eta ,  T') \mapsto  \rho^g _{\cdot, T'}( \eta) $ thus represents 
 a {\em dynamic risk measure} induced by the BSDE with driver $g$.
In order to ensure the {\em monotonicity} property of 
$\rho^g$, that is, the monotonicity property with respect to the financial position, which is naturally required for risk measures, we assume from now on that the driver $g$ satisfies the following assumption (cf. \cite[Thm. 4.2, combined with Prop. 3.2]{Quenez-Sulem-0} and the references therein).

\begin{Assumption}\label{Royer} 
Assume that  $dP \otimes dt$-a.e.\, for each $(y,z, {\ell}_1,{\ell}_2)$ $\in$ $ \mathbb{R}^2 \times (L^2_{\nu})^2$,
$$g( t,y,z, {\ell}_1)- g(t,y,z, {\ell}_2) \geq \langle \theta_t^{y,z, {\ell}_1,{\ell}_2}  \,,\,{\ell}_1 - {\ell}_2 \rangle_\nu,$$ 
where the mapping
\begin{equation*}
\theta:  [0,T]  \times \Omega\times\mathbb{R}^2 \times  (L^2_{\nu})^2  \rightarrow  L^2_{\nu}\,; \, (\omega, t, y,z, {\ell}_1,{\ell}_2)\mapsto 
\theta_t^{y,z, {\ell}_1,{\ell}_2}(\omega,\cdot)
\end{equation*}
 is ${\cal P } \otimes {\cal B}(\R^2) \otimes  {\cal B}( (L^2_{\nu})^2 )$-measurable,  and satisfies $ dP\otimes dt\otimes d\nu(e)$-a.e.\,, for each $(y,z, {\ell}_1,{\ell}_2)$ $\in$ $\mathbb{R}^2 \times (L^2_{\nu})^2$,
 \begin{equation}\label{condi}
\theta_t^{y,z, {\ell}_1,{\ell}_2} (e)\geq -1.
\end{equation}
Assume, moreover, that $\theta$ is uniformly bounded, in the sense that, 
 $ dP\otimes dt$-a.e.\,, for each $(y,z, {\ell}_1,{\ell}_2)$, $\|\theta_t^{y,z, {\ell}_1,{\ell}_2}\|_\nu \leq K$, where $K$ is a positive constant.
\end{Assumption}

\section{Optimal stopping with $g$-expectations in discrete time}\label{sect_optimal}
Let $(\xi_t)_{t\in[0,T]}$ be a given $\F$-adapted square-integrable process  modelling an agent's  dynamic financial position. The agent is allowed to "stop" only at given times  $0=t_0< t_1 <\ldots< t_n=T$, where $n \in \N$.
The agent's risk is assessed through a dynamic risk measure $\rho^g$ induced by a BSDE with a given Lipschitz driver $g$; the dynamic risk measure $\rho^g$ corresponds (up to a minus sign) to the family of $g$-conditional expectations .  The agent's aim at time $0$ is to choose his/her strategy in such a way that the risk of his/her position from  "time $0$-perspective" be minimal.
The minimal risk at time $0$  is defined by
\begin{equation}\label{optimal_stopping_problem_0}
V(0):= \inf_{\tau\in\stopo} \rho^g_{0,\tau}(\xi_\tau)=  -\sup_{\tau\in\stopo} \ce^{g}_{0,\tau}(\xi_\tau).
\end{equation}
We have $V(0) = -V_0$, where
\begin{equation}\label{optimal_stopping_problem}
V_0:=\sup_{\tau\in\stopo} \ce^{g}_{0,\tau}(\xi_\tau).
\end{equation}
We are thus facing an optimal stopping problem  in discrete time with $g$-expectation.
Our purpose in this section is to compute or characterize   the minimal  risk measure $V(0)$
(or equivalently,  $V_0$)  
and  to study the question of the existence of   an 
optimal stopping time. \\
In order to simplify the notation, we suppose from now on  that the terminal time $T$ is in  $\N$ 
and that $t_k=k$, for all $k=1,\ldots,n$. In this case, the set  ${\cal T}_{t_k,T}^d$ corresponds to the set $\stopt$ of stopping times  whose values are almost surely in   the set $\{k,k+1,\ldots,T\}$.  We will use the notation  $\F^d$ for the filtration $(\cf_k)_{k\in\{0,1,\ldots,T\}}$. 

\subsection{Discrete-time $g$-(super)martingales}\label{subsect1}
We introduce the notion of  discrete-time $g$-(super)martingales, which is to be compared with the definition of a $\mathcal{E}^g$-supermartingale (in continuous time), respectively $\mathcal{E}^g$-martingale (in continuous time).
\begin{definition}\label{def_supermartingale}
Let $(\phi_k)_{k\in\{0,1,\ldots, T\}}$ be a sequence of square-integrable random variables, adapted to $(\cf_k)_{k\in\{0,1,\ldots, T\}}$.  We say that the sequence
$(\phi_k)$ is a \emph{$g$-supermartingale} (resp. \emph{$g$-martingale}) \emph{in discrete time} if 
$\phi_k\geq   \ce^{g}_{k,k+1}(\phi_{k+1}),$ for all $k\in\{0,1,\ldots, T-1\}.$
\end{definition}
\begin{Remark}
We note that if $(\phi_t)_{t\in[0,T]}$ is a $\mathcal{E}^g$-martingale (in continuous time) with respect to the filtration  $\F$, then $(\phi_k)_{k\in\{0,1,\ldots,T\}}$ is a $g$-martingale in discrete time  in the sense of Definition \ref{def_supermartingale}.   
\end{Remark}
\begin{Remark} In the case where $g\equiv 0$ (corresponding to the classical expectation), if $(\phi_k)_{k\in\{0,1,\ldots,T\}}$ is a \emph{discrete-time} martingale with respect to $(\cf_k)_{k\in\{0,1,\ldots, T\}}$, then $(\phi_k)_{k\in\{0,1,\ldots,T\}}$ can be extended into a \emph{continuous-time} martingale  with respect to $\F$ (with time parameter $t$ in $[0,T]$) by setting $\phi_t:= \phi_k$, for all $t\in(k,k+1)$, for all $k\in\{0,\ldots, T-1\}.$ This statement does not necessarily hold true in 
the case of a general driver $g$.    
\end{Remark}

\begin{Remark}\label{Rmk_g_expectation_extension}
 Let $(\phi_t)$ be a square-integrable adapted process. We recall that by definition  
$\ce^{g}_{t,s}(\phi_s)=\phi_s$, for all $T \geq t\geq s\geq 0$.\\
If $\phi_t$ is $\cf_t$-measurable, then  $\ce^{g^t}_{t,T}(\phi_t)=\ce^{g}_{t,t}(\phi_t)=\phi_t.$
\end{Remark}

\begin{theorem}\label{thm_martingale_arretee}
Let $(\phi_k)_{k\in\{0,1,\ldots, T\}}$ be a $g$-supermartingale (resp. a $g$-martingale) in discrete time. Let $\tau\in\stopo$. Then, the stopped process $(\phi_{k\wedge\tau})_{k\in\{0,1,\ldots, T\}}$ is a $g^\tau$-supermartingale (resp. a $g^\tau$-martingale) in discrete time. 
\end{theorem}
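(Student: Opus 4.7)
The plan is to verify, for each $k\in\{0,\ldots,T-1\}$, the inequality
\[\phi_{k\wedge\tau}\;\geq\;\mathcal{E}^{g^\tau}_{k,k+1}(\phi_{(k+1)\wedge\tau})\]
by splitting $\Omega$ along the two $\mathcal{F}_k$-measurable events $A_k:=\{\tau\geq k+1\}$ and $A_k^c=\{\tau\leq k\}$. The tool that makes the split work is a pasting/localization property of $g$-conditional expectations: if $A\in\mathcal{F}_k$ and two pairs of standard data $(g_1,\xi_1),(g_2,\xi_2)$ satisfy $\mathbb{I}_A g_1=\mathbb{I}_A g_2$ ($dP\otimes dt$-a.e.\ on $A\times[k,k+1]$) and $\mathbb{I}_A\xi_1=\mathbb{I}_A\xi_2$, then $\mathbb{I}_A\mathcal{E}^{g_1}_{k,k+1}(\xi_1)=\mathbb{I}_A\mathcal{E}^{g_2}_{k,k+1}(\xi_2)$. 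This is a consequence of BSDE uniqueness and is closely related to Proposition \ref{zero-one law general} of the Appendix.

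On $A_k$, $k\wedge\tau=k$ and $(k+1)\wedge\tau=k+1$, and for every $s\in[k,k+1]$ one has $s\leq k+1\leq\tau$, so $g^\tau(s,\cdot)=g(s,\cdot)$ on $A_k$. Comparing with the pair $(g,\phi_{k+1})$ via the pasting property and then using the $g$-supermartingale property of $(\phi_k)$ (Definition \ref{def_supermartingale}),
\[\mathbb{I}_{A_k}\mathcal{E}^{g^\tau}_{k,k+1}(\phi_{(k+1)\wedge\tau})=\mathbb{I}_{A_k}\mathcal{E}^{g}_{k,k+1}(\phi_{k+1})\leq\mathbb{I}_{A_k}\phi_k=\mathbb{I}_{A_k}\phi_{k\wedge\tau}.\]
On $A_k^c$, $k\wedge\tau=(k+1)\wedge\tau=\tau\leq k$, so $\phi_{(k+1)\wedge\tau}=\phi_\tau$ and $\mathbb{I}_{A_k^c}\phi_\tau$ is $\mathcal{F}_k$-measurable. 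Moreover $g^\tau(s,\cdot)=0$ for a.e.\ $s\in(k,k+1]$ on $A_k^c$. Comparing with the pair $(0,\phi_{(k+1)\wedge\tau})$, whose BSDE solution on $[k,k+1]$ reduces to a classical conditional expectation, and using that $\mathbb{I}_{A_k^c}\phi_{(k+1)\wedge\tau}$ is $\mathcal{F}_k$-measurable,
\[\mathbb{I}_{A_k^c}\mathcal{E}^{g^\tau}_{k,k+1}(\phi_{(k+1)\wedge\tau})=\mathbb{I}_{A_k^c}E[\phi_\tau\mid\mathcal{F}_k]=\mathbb{I}_{A_k^c}\phi_\tau=\mathbb{I}_{A_k^c}\phi_{k\wedge\tau}.\]
Summing the two pieces yields the desired inequality; the $g$-martingale statement follows identically, with equalities throughout.

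The main obstacle is the pasting/localization lemma itself. Under Assumption \ref{Royer} and for a standard Lipschitz driver, however, it is a direct consequence of BSDE uniqueness: one multiplies the BSDE on $[k,k+1]$ by $\mathbb{I}_A\in\mathcal{F}_k$ and verifies that the two candidate solutions satisfy one and the same BSDE with the common data $\mathbb{I}_A(g^\tau,\phi_{(k+1)\wedge\tau})$. With this tool in hand, the argument above is the natural discrete-time analogue of the continuous-time fact that a stopped $\mathcal{E}^g$-(super)martingale remains an $\mathcal{E}^{g^\tau}$-(super)martingale.
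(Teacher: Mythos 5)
Your proposal is correct and takes essentially the same route as the paper's proof: the same decomposition along the $\cf_k$-measurable events $\{\tau\geq k+1\}$ and $\{\tau\leq k\}$, with your pasting/localization lemma being exactly what two applications of Proposition \ref{zero-one law general} provide, followed by the $g$-supermartingale property on $\{\tau\geq k+1\}$. The only cosmetic difference is that on $\{\tau\leq k\}$ you reduce to the zero driver and a classical conditional expectation of an $\cf_k$-measurable variable, where the paper instead invokes the defining property $\ce^g_{t,\sigma}(\xi)=\xi$ on $\{t\geq\sigma\}$ of BSDEs with stopping-time terminal time.
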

\begin{proof}
Let $k\in\{0,1,\ldots, T-1\}$.  Since $\ce^{g^\tau}_{k,k+1}(\phi_{(k+1)\wedge\tau}) = \ce^{g}_{k,(k+1)\wedge\tau}(\phi_{(k+1)\wedge\tau})$, 
it is sufficient to prove the following:
\begin{equation}\label{eq22_prop_optional_sampling}
\ce^{g}_{k,(k+1)\wedge\tau}(\phi_{(k+1)\wedge\tau})\leq \phi_{k\wedge\tau}.
\end{equation}
We write
\begin{equation}\label{eq_decomposition_prop_optional_sampling}
\ce^{g}_{k,(k+1)\wedge\tau}(\phi_{(k+1)\wedge\tau})=\I_{\{\tau\leq k\}}\ce^{g}_{k,(k+1)\wedge\tau}(\phi_{(k+1)\wedge\tau})+ \I_{\{\tau\geq k+1\}}\ce^{g}_{k,(k+1)\wedge\tau}(\phi_{(k+1)\wedge\tau}).
\end{equation}
Due to the definition of the solution of a standard BSDE with a stopping time as a terminal time, we have
\begin{equation}\label{eq_first_term_prop_optional_sampling}
\begin{aligned}
\I_{\{\tau\leq k\}}\ce^{g}_{k,(k+1)\wedge\tau}(\phi_{(k+1)\wedge\tau})=\I_{\{\tau\leq k\}}\phi_{(k+1)\wedge\tau}=
\I_{\{\tau\leq k\}}\phi_{\tau}.
\end{aligned}
\end{equation}
For the second term on the right-hand side of equation \eqref{eq_decomposition_prop_optional_sampling} we have 
\begin{equation}\label{eq_second_term_prop_optional_sampling}
\I_{\{\tau\geq k+1\}}\ce^{g}_{k,(k+1)\wedge\tau}(\phi_{(k+1)\wedge\tau})\leq 
\I_{\{\tau\geq k+1\}}\phi_{k\wedge\tau}.
\end{equation}
Indeed, after noticing that $\I_{\{\tau\geq k+1\}}$ is $\cf_k$-measurable, we apply Proposition \ref{zero-one law general} to obtain
\begin{equation}\label{eq_final_prop_optional_sampling}
\begin{aligned}
\I_{\{\tau\geq k+1\}}\ce^{g}_{k,(k+1)\wedge\tau}(\phi_{(k+1)\wedge\tau})&=\I_{\{\tau\geq k+1\}}\ce^{g^{(k+1)\wedge\tau}}_{k,T}(\phi_{(k+1)\wedge\tau})\\
&=\ce^{g^{(k+1)\wedge\tau}\I_{\{\tau\geq k+1\}}}_{k,T}(\I_{\{\tau\geq k+1\}}\phi_{k+1}).\\
\end{aligned}
\end{equation}
By using the fact that $g^{(k+1)\wedge\tau}\I_{\{\tau\geq k+1\}}=g^{k+1}\I_{\{\tau\geq k+1\}}$ and by applying Proposition \ref{zero-one law general} again, we get 
\begin{equation}\label{eq_final_supp_prop_optional_sampling}
\begin{aligned}
\ce^{g^{(k+1)\wedge\tau}\I_{\{\tau\geq k+1\}}}_{k,T}(\I_{\{\tau\geq k+1\}}\phi_{k+1})&=\ce^{g^{k+1}\I_{\{\tau\geq k+1\}}}_{k,T}(\I_{\{\tau\geq k+1\}}\phi_{k+1})\\
&=\I_{\{\tau\geq k+1\}}\ce^{g^{k+1}}_{k,T}(\phi_{k+1})=I_{\{\tau\geq k+1\}}\ce^{g}_{k,k+1}(\phi_{k+1}).
\end{aligned}
\end{equation}

As $\phi$ is a $g$-supermartingale in discrete time, we have
$\I_{\{\tau\geq k+1\}}\ce^{g}_{k,k+1}(\phi_{k+1})\leq \I_{\{\tau\geq k+1\}}\phi_{k}=\I_{\{\tau\geq k+1\}}\phi_{k\wedge\tau},$ which proves the inequality \eqref{eq_second_term_prop_optional_sampling}.
From \eqref{eq_first_term_prop_optional_sampling} and \eqref{eq_second_term_prop_optional_sampling} we get the desired inequality \eqref{eq22_prop_optional_sampling}. The theorem is thus proved.
\end{proof}

\begin{Remark}
We know that a "classical" (super)martingale in discrete time, stopped at a stopping time $\tau$, is again a (super)martingale. A $g$-(super)martingale in discrete time, stopped at a  stopping time $\tau\in\stopo$, is generally  not a $g$-(super)martingale, but a  $g^\tau$-(super)martingale (in virtue of the previous Theorem \ref{thm_martingale_arretee}).  This is illustrated by the following example.
Let $g$ be a driver which does not depend on $y$, $z$, and $\ell$ (i.e. $g(\omega,t,y,\ell)\equiv g(\omega,t)$). Recall that in this case the solution $Y$ of the BSDE with driver $g$ and terminal condition $\xi$ is given explicitly by 
\begin{equation}\label{Rk_new_eq_0} 
 Y_t=\E(\int_t^T g(s) ds +\xi|\cf_t),\text{ for all } t\in[0,T].
 \end{equation} 
Assume that $g$ is positive.
Let $\phi$ be a $g$-martingale in discrete time and take $\tau\equiv k$, where $k\in\{0,\ldots, T-1\}$. By applying \eqref{Rk_new_eq_0} with $\xi:=\phi_k$, we obtain 
$$\ce^{g}_{k,T}(\phi_{(k+1)\wedge k})=\ce^{g}_{k,T}(\phi_{k})=\E(\int_k^T g(s) ds +\phi_k|\cf_k)=\E(\int_k^T g(s) ds|\cf_k)+\phi_k>\phi_k,$$
the inequality being due to the positivity of $g$. Hence, $\phi$ stopped at $k$ is not a $g$-martingale in discrete time.
\end{Remark}

We now establish an "optional sampling" result for $g$-supermartingales (resp. for $g$-martingales). The result can be obtained as a corollary of the previous theorem. 
\begin{corollary} \label{cor_optional_sampling}
Let $(\phi_k)_{k\in\{0,1,\ldots, T\}}$ be a $g$-supermartingale (resp. a $g$-martingale) in discrete time. Then, for  $\sigma, \tau$ in $\stopo$ such that $\sigma\leq\tau$ a.s., we have 
$$\ce^{g}_{\sigma,\tau}(\phi_{\tau})\leq \phi_{\sigma} \text{ (resp. }=\phi_{\sigma}) \text{ a.s.} $$
\end{corollary}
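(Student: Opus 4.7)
The plan is to deduce the statement from Theorem \ref{thm_martingale_arretee} by iterating the one-step supermartingale inequality along the finite grid $\{0,1,\dots,T\}$, and then to ``evaluate at $\sigma$'' by splitting on the (finitely many) values of $\sigma$.

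First, I would apply Theorem \ref{thm_martingale_arretee} with the stopping time $\tau$: the stopped process $(\phi_{k\wedge\tau})_{k\in\{0,1,\dots,T\}}$ is a $g^\tau$-supermartingale in discrete time, that is, for each $k\in\{0,\dots,T-1\}$,
$$\ce^{g^\tau}_{k,k+1}\bigl(\phi_{(k+1)\wedge\tau}\bigr)\leq \phi_{k\wedge\tau}.$$
Using the monotonicity of $\ce^{g^\tau}_{k,k+1}$ (which holds under Assumption \ref{Royer}) and the flow (consistency) property $\ce^{g^\tau}_{k,k+2}=\ce^{g^\tau}_{k,k+1}\circ\ce^{g^\tau}_{k+1,k+2}$, a straightforward backward induction on $m\geq k$ gives
$$\ce^{g^\tau}_{k,m}\bigl(\phi_{m\wedge\tau}\bigr)\leq \phi_{k\wedge\tau}\qquad\text{for all }0\leq k\leq m\leq T.$$
Taking $m=T$ and recalling that $\phi_{T\wedge\tau}=\phi_\tau$ and that, by the definition of the BSDE with stopping-time terminal recalled in Section \ref{defin_sect}, $\ce^{g^\tau}_{k,T}(\phi_\tau)=\ce^{g}_{k,\tau}(\phi_\tau)$, I obtain
$$\ce^{g}_{k,\tau}(\phi_\tau)\leq \phi_{k\wedge\tau}\qquad\text{for all }k\in\{0,\dots,T\}.$$

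Next, I transfer this deterministic-index statement to the random index $\sigma\in\stopo$ with $\sigma\leq\tau$ a.s. On $\{\sigma=k\}$, the inequality $\sigma\leq\tau$ forces $k\leq\tau$, so $k\wedge\tau=k$ and thus $\phi_{k\wedge\tau}=\phi_k=\phi_\sigma$. Multiplying the previous inequality by $\I_{\{\sigma=k\}}$ gives
$$\I_{\{\sigma=k\}}\,\ce^{g}_{k,\tau}(\phi_\tau)\leq \I_{\{\sigma=k\}}\,\phi_\sigma\qquad\text{for each }k\in\{0,\dots,T\}.$$
To rewrite the left-hand side as $\I_{\{\sigma=k\}}\,\ce^{g}_{\sigma,\tau}(\phi_\tau)$, I would invoke Proposition \ref{zero-one law general} (the zero-one law for $g$-expectations recorded in the appendix) applied to the $\cf_k$-measurable indicator $\I_{\{\sigma=k\}}$, which yields
$$\I_{\{\sigma=k\}}\,\ce^{g}_{\sigma,\tau}(\phi_\tau)=\I_{\{\sigma=k\}}\,\ce^{g}_{k,\tau}(\phi_\tau).$$
Summing over $k\in\{0,\dots,T\}$ then produces $\ce^{g}_{\sigma,\tau}(\phi_\tau)\leq \phi_\sigma$ a.s.

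For the martingale case, the same argument works verbatim with every inequality replaced by an equality, since Theorem \ref{thm_martingale_arretee} asserts that $(\phi_{k\wedge\tau})$ is a $g^\tau$-\emph{martingale} in that case and the iterated relation $\ce^{g^\tau}_{k,m}(\phi_{m\wedge\tau})=\phi_{k\wedge\tau}$ then follows from the flow property alone, with no need of monotonicity. The main obstacle in the proof is precisely the passage from a deterministic time $k$ to the random stopping time $\sigma$: this is where Proposition \ref{zero-one law general} is essential, and the finiteness of the grid $\{0,1,\dots,T\}$ allows the partition $\Omega=\bigcup_k\{\sigma=k\}$ to reduce the question to finitely many deterministic instances of the inequality already obtained.
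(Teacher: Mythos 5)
Your proof is correct, and it follows the same overall strategy as the paper: reduce the statement to a family of inequalities with a deterministic lower time index, establish that family by backward induction starting from Theorem \ref{thm_martingale_arretee}, and finally localize on the events $\{\sigma=k\}$ using that $\sigma$ takes finitely many values. The difference is in how the induction is organized. You pass once and for all to the driver $g^\tau$ and iterate the one-step inequality $\ce^{g^\tau}_{k,k+1}(\phi_{(k+1)\wedge\tau})\leq\phi_{k\wedge\tau}$ between \emph{deterministic} dates, using only the deterministic-time flow property and monotonicity (which indeed carries over to $g^\tau$, since $g^\tau$ inherits Assumption \ref{Royer} with $\theta\I_{\{t\leq\tau\}}$ --- a point worth stating explicitly), arriving at $\ce^{g}_{k,\tau}(\phi_\tau)=\ce^{g^\tau}_{k,T}(\phi_\tau)\leq\phi_{k\wedge\tau}$. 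The paper instead inducts on the stopping-time-indexed quantity $\ce^{g}_{k\wedge\tau,\tau}(\phi_\tau)$, which forces it to split off the event $\{\tau<k\}$ and handle it with Proposition \ref{zero-one law general} and the trivialized driver; your version avoids that extra case inside the induction, which is a mild but genuine simplification. One small inaccuracy: in the final localization step, Proposition \ref{zero-one law general} is not what gives $\I_{\{\sigma=k\}}\ce^{g}_{\sigma,\tau}(\phi_\tau)=\I_{\{\sigma=k\}}\ce^{g}_{k,\tau}(\phi_\tau)$, nor does its statement literally apply there; this identity is simply the pointwise meaning of evaluating the solution process $(\ce^{g}_{t,\tau}(\phi_\tau))_{t}$ at the random time $\sigma$ on the set $\{\sigma=k\}$, and this is exactly how the paper's proof treats the corresponding step (without citation). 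The misattribution does not affect the validity of the argument.
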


\begin{proof}
We prove the result for the case of a $g$-supermartingale; the case of a $g$-martingale can be treated similarly.
Let $\sigma, \tau$ in $\stopo$ be such that $\sigma\leq\tau$ a.s. We notice that it suffices to prove the following property:
\begin{equation}\label{eq1_prop_optional_sampling}
\ce^{g}_{k\wedge\tau,\tau}(\phi_{\tau})\leq \phi_{k\wedge\tau}, \text{ for all } k\in\{0,1,\ldots, T\}.
\end{equation}
Indeed, this property proven, we will have
\begin{equation*}
\begin{aligned}
\ce^{g}_{\sigma,\tau}(\phi_{\tau})&=\ce^{g}_{\sigma\wedge\tau,\tau}(\phi_{\tau})=\sum_{k=0}^T \I_{\{\sigma=k\}}\ce^{g}_{k\wedge\tau,\tau}(\phi_{\tau})\leq \sum_{k=0}^T \I_{\{\sigma=k\}}\phi_{k\wedge\tau}=\phi_{\sigma\wedge\tau}=\phi_\sigma,
\end{aligned}
\end{equation*} 
which will conclude the proof.
Let us now prove property \eqref{eq1_prop_optional_sampling}. We proceed by backward induction. 
At the final time $T$ we have
$$\ce^{g}_{T\wedge\tau,\tau}(\phi_{\tau})=\ce^{g}_{\tau,\tau}(\phi_{\tau})=\phi_\tau=\phi_{T\wedge\tau}.$$
We suppose that the property \eqref{eq1_prop_optional_sampling} holds true for $k+1.$
Then, by using this induction hypothesis, the time-consistency and  the monotonicity of the $g$-conditional expectation (the latter property holds under Assumption \ref{Royer}), we get
$$\ce^{g}_{k\wedge\tau,\tau}(\phi_{\tau})=\ce^{g}_{k\wedge\tau,(k+1)\wedge\tau}(\ce^{g}_{(k+1)\wedge\tau, \tau}(\phi_{\tau}))\leq \ce^{g}_{k\wedge\tau,(k+1)\wedge\tau}(\phi_{(k+1)\wedge\tau}).
$$
In order to conclude, it remains to prove 
\begin{equation}\label{eq2_prop_optional_sampling}
\ce^{g}_{k\wedge\tau,(k+1)\wedge\tau}(\phi_{(k+1)\wedge\tau})\leq \phi_{k\wedge\tau}.
\end{equation}
We have 
\begin{equation}\label{eq_number_74}
  \I_{\{\tau\geq k\}}\ce^{g}_{k\wedge \tau,(k+1)\wedge\tau}(\phi_{(k+1)\wedge\tau})=\I_{\{\tau\geq k\}}\ce^{g}_{k,(k+1)\wedge\tau}(\phi_{(k+1)\wedge\tau})\leq 
\I_{\{\tau\geq k\}}\phi_{k\wedge\tau},
\end{equation}
where we have used Theorem \ref{thm_martingale_arretee} to obtain the inequality.

By Proposition \ref{zero-one law general}, we have 
\begin{equation}\label{eq_number_75}
\I_{\{\tau<k\}}\ce^{g}_{ k\wedge\tau,(k+1)\wedge\tau}(\phi_{(k+1)\wedge\tau})
=\ce^{g^{(k+1)\wedge\tau}\I_{\{\tau<k\}}}_{ \tau,T}(\phi_{\tau}\I_{\{\tau<k\}}).
\end{equation}
According to the convention given in Remark \ref{Rmk_measurability},
the "driver" $g^{(k+1)\wedge\tau}(s,y,z,\ell) \I_{\{\tau<k\}}$ is here equal to 
$g^{(k+1)\wedge\tau}(s,y,z,\ell) \I_{\{\tau<k\}}   \I_{]\tau,T]}(s)$, which is equal to zero. Hence, we have
$$\ce^{g^{(k+1)\wedge\tau}\I_{\{\tau<k\}}}_{ \tau,T}(\phi_{\tau}\I_{\{\tau<k\}})=\ce^{0}_{ \tau,T}(\phi_{\tau}\I_{\{\tau<k\}})= \phi_{\tau}\I_{\{\tau<k\}}.$$
 where the last equality is due to  the ${\cal F}_{\tau}$-measurability of 
$\phi_{\tau}\I_{\{\tau<k\}}$. \\
From equations \eqref{eq_number_74} and \eqref{eq_number_75} we deduce  \eqref{eq2_prop_optional_sampling}. 
The proposition is thus proved.
\end{proof}

\subsection{Discrete-time $g$-Snell envelope and optimal stopping times}\label{subsect2}
We now turn to the optimal stopping problem of the beginning of the section.
As is usual in optimal control, we embed the above optimization problem \eqref{optimal_stopping_problem} in a larger class of problems by considering 
\begin{equation}\label{optimal_stopping_problem_k}
V_k:=\esssup_{\tau\in\stopt} \ce^{g}_{k,\tau}(\xi_\tau), \text{ for } k\in\{0,1,\ldots, T\}.
\end{equation}


The following definition is analogous to the definition of the Snell envelope of a given process in discrete time, where we have replaced the  mathematical expectation of the classical setting by a $g$-expectation.
We define  the process $(U_k)_{k\in\{0,1,\ldots, T\}}$ by backward induction as follows:
\begin{equation}\label{Un}
\begin{cases}
U_T=\xi_T,\\
U_k=\max\big(\xi_k; \ce^{g}_{k,k+1}(U_{k+1})\big), \text{ for } k\in\{0,1,\ldots, T-1\}.
\end{cases}
\end{equation}
From \eqref{Un} we see by backward induction that $(U_k)$ is a well-defined, $(\cf_k)$-adapted sequence of square integrable random variables.  The sequence $(U_k)$ will be called the \emph{ $g$-Snell envelope in discrete time } of $(\xi_k).$

We now give a characterization of the $g$-Snell envelope in discrete time  
of $(\xi_k).$
\begin{proposition}\label{prop_smallest_supermartingale}
The sequence $(U_k)_{k\in\{0,1,\ldots, T\}}$ defined in equation \eqref{Un} is the smallest $g$-supermartingale in discrete time dominating the sequence $(\xi_k)_{k\in\{0,1,\ldots, T\}}$.  
\end{proposition}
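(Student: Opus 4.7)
The plan is to verify the two defining properties separately: first that $(U_k)$ is itself a $g$-supermartingale in discrete time dominating $(\xi_k)$, and second that it is minimal among such processes. Square-integrability and adaptedness of $(U_k)$ are already noted, so the structural content is really these two properties.

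For the first part, both requirements read off the recursion \eqref{Un} by inspection. The inequality $U_k \geq \xi_k$ is immediate from $U_k=\max(\xi_k,\mathcal{E}^g_{k,k+1}(U_{k+1}))$, and likewise $U_k \geq \mathcal{E}^g_{k,k+1}(U_{k+1})$, which is exactly the discrete-time $g$-supermartingale condition of Definition~\ref{def_supermartingale}. No further computation is needed here.

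For the minimality, I would fix an arbitrary discrete-time $g$-supermartingale $(\phi_k)$ with $\phi_k \geq \xi_k$ for every $k$, and prove $\phi_k \geq U_k$ by backward induction on $k$. At $k=T$ one has $\phi_T \geq \xi_T = U_T$. For the inductive step, assuming $\phi_{k+1} \geq U_{k+1}$, the monotonicity of $\mathcal{E}^g_{k,k+1}$ (which holds under Assumption~\ref{Royer}) gives $\mathcal{E}^g_{k,k+1}(\phi_{k+1}) \geq \mathcal{E}^g_{k,k+1}(U_{k+1})$, and combining with $\phi_k \geq \mathcal{E}^g_{k,k+1}(\phi_{k+1})$ and $\phi_k \geq \xi_k$ yields
\[
\phi_k \geq \max\bigl(\xi_k,\; \mathcal{E}^g_{k,k+1}(U_{k+1})\bigr) = U_k.
\]

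There is no real obstacle here; the proof is a direct consequence of the recursive definition plus the monotonicity of the $g$-conditional expectation. The only subtlety worth flagging is that the argument depends on monotonicity, so Assumption~\ref{Royer} is used implicitly, and on the fact that square-integrability is preserved along the backward recursion (an easy consequence of the stability of $L^2$ under $\mathcal{E}^g_{k,k+1}$ and the max operation), which is why $(U_k)$ lies in the class of admissible comparison processes.
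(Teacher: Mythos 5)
Your proof is correct and follows essentially the same route as the paper: read the domination and $g$-supermartingale property directly from the recursion \eqref{Un}, then establish minimality by backward induction using the monotonicity of $\ce^{g}_{k,k+1}(\cdot)$ under Assumption \ref{Royer}. No issues to flag.
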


The proof of the above proposition is similar to the  proof in the case of a classical 
expectation and is given in the Appendix for reader's convenience.

Let $k\in\{0,1,\ldots, T\}$ be given. 
We define the following stopping time:
\begin{equation}\label{nu_zero}
\nu_k:=\inf\{l\in\{k,\ldots, T\}:U_l=\xi_l\}.
\end{equation}
The following propositions hold true.
\begin{proposition}\label{prop_stopped_process}
Let $k\in\{0,1,\ldots, T-1\}.$ Let $\nu_k$ be the stopping time defined in \eqref{nu_zero}.
The sequence $(U_{l\wedge \nu_k})_{l\in\{k,\ldots, T\}}$ is a $g^{\nu_k}$-martingale in discrete time.
\end{proposition}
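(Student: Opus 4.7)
The plan is to establish the identity
\[
\ce^{g^{\nu_k}}_{l,l+1}\bigl(U_{(l+1)\wedge \nu_k}\bigr) \;=\; U_{l\wedge \nu_k}, \qquad l\in\{k,\ldots,T-1\},
\]
by splitting on the $\cf_l$-measurable events $\{\nu_k\geq l+1\}$ and $\{\nu_k\leq l\}$, in the same spirit as the proof of Theorem \ref{thm_martingale_arretee}. Since $U$ is a $g$-supermartingale in discrete time (Proposition \ref{prop_smallest_supermartingale}), Theorem \ref{thm_martingale_arretee} already furnishes the inequality $\ce^{g^{\nu_k}}_{l,l+1}(U_{(l+1)\wedge \nu_k}) \leq U_{l\wedge \nu_k}$, so only the reverse inequality is at stake.

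On $\{\nu_k \geq l+1\}$ one has $l\wedge\nu_k = l$ and $(l+1)\wedge\nu_k=l+1$. Moreover, the definition \eqref{nu_zero} of $\nu_k$ together with $U\geq \xi$ (Proposition \ref{prop_smallest_supermartingale}) forces the strict inequality $U_l > \xi_l$ on this set, so the recursion \eqref{Un} yields $U_l = \ce^g_{l,l+1}(U_{l+1})$ there. Multiplying by the $\cf_l$-measurable indicator $\I_{\{\nu_k\geq l+1\}}$ and invoking Proposition \ref{zero-one law general} exactly as in the chain \eqref{eq_final_prop_optional_sampling}--\eqref{eq_final_supp_prop_optional_sampling}, one concludes
\[
\I_{\{\nu_k\geq l+1\}}\,\ce^{g^{\nu_k}}_{l,l+1}(U_{(l+1)\wedge \nu_k}) \;=\; \I_{\{\nu_k\geq l+1\}}\,\ce^g_{l,l+1}(U_{l+1}) \;=\; \I_{\{\nu_k\geq l+1\}}\,U_{l\wedge\nu_k}.
\]

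On $\{\nu_k\leq l\}$, both $(l+1)\wedge \nu_k$ and $l\wedge \nu_k$ coincide with $\nu_k$, and the common value $U_{\nu_k}$ is $\cf_{\nu_k}$-measurable. Since the modified driver $g^{\nu_k}(s,\cdot)=g(s,\cdot)\I_{\{s\leq \nu_k\}}$ vanishes on $(l,l+1]$ whenever $\nu_k\leq l$, a further application of Proposition \ref{zero-one law general}, combined with the fact that a BSDE with zero driver leaves its terminal condition invariant (cf.\ Remark \ref{Rmk_g_expectation_extension}), yields
\[
\I_{\{\nu_k\leq l\}}\,\ce^{g^{\nu_k}}_{l,l+1}(U_{(l+1)\wedge \nu_k}) \;=\; \I_{\{\nu_k\leq l\}}\,U_{\nu_k} \;=\; \I_{\{\nu_k\leq l\}}\,U_{l\wedge \nu_k}.
\]
Adding the two pieces gives the desired equality, establishing the $g^{\nu_k}$-martingale property.

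The principal difficulty is the careful manipulation with modified drivers and $\cf_l$-measurable indicators via Proposition \ref{zero-one law general}; this bookkeeping is essentially already performed in the proof of Theorem \ref{thm_martingale_arretee}, so the truly new ingredient is the strict-domination observation $U_l > \xi_l$ on $\{\nu_k \geq l+1\}$, which is precisely what promotes the supermartingale property into the martingale property on the stopped process.
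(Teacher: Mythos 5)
Your proposal is correct and follows essentially the same route as the paper: the same split on $\{\nu_k\leq l\}$ and $\{\nu_k\geq l+1\}$, the same key observation that $U_l>\xi_l$ on $\{\nu_k\geq l+1\}$ forces $U_l=\ce^{g}_{l,l+1}(U_{l+1})$ there, and the same use of Proposition \ref{zero-one law general} as in Theorem \ref{thm_martingale_arretee}. The only cosmetic difference is that on $\{\nu_k\leq l\}$ the paper invokes directly the definition of the BSDE with a stopping time as terminal time, whereas you pass through the zero-driver argument (as the paper itself does in Corollary \ref{cor_optional_sampling}).
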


\begin{proof}
Let $l\in\{k,\ldots, T-1\}.$
We show that $U_{l\wedge\nu_k}=   \ce^{g}_{l,(l+1)\wedge\nu_k}(U_{(l+1)\wedge\nu_k})$ 
We write
\begin{equation}\label{eq_decomposition_prop_stopped_process}
\ce^{g}_{l,(l+1)\wedge\nu_k}(U_{(l+1)\wedge\nu_k})=\I_{\{\nu_k\leq l\}}\ce^{g}_{l,(l+1)\wedge\nu_k}(U_{(l+1)\wedge\nu_k})+ \I_{\{\nu_k\geq l+1\}}\ce^{g}_{l,(l+1)\wedge\nu_k}(U_{(l+1)\wedge\nu_k}).
\end{equation}
As in the proof of Theorem \ref{thm_martingale_arretee}, we have, by definition of the solution of the BSDE with a stopping time as a terminal time,  
\begin{equation}\label{eq_1_prop_stopped_process}
\I_{\{\nu_k\leq l\}}\ce^{g}_{l,(l+1)\wedge\nu_k}(U_{(l+1)\wedge\nu_k})=\I_{\{\nu_k\leq l\}} U_{(l+1)\wedge\nu_k}=\I_{\{\nu_k\leq l\}} U_{l\wedge\nu_k}.  
\end{equation}
For the second term on the right-hand side of equation \eqref{eq_decomposition_prop_stopped_process} we use again the same arguments as those of the proof of Theorem \ref{thm_martingale_arretee} (cf. equations \eqref{eq_final_prop_optional_sampling} and \eqref{eq_final_supp_prop_optional_sampling}) to show 
$$\I_{\{\nu_k\geq l+1\}}\ce^{g}_{l,(l+1)\wedge\nu_k}(U_{(l+1)\wedge\nu_k})=
\I_{\{\nu_k\geq l+1\}}\ce^{g}_{l,l+1}(U_{l+1}).        $$
From the definition of $\nu_k$ we see that $U_l>\xi_l$ on the set $\{\nu_k\geq l+1\}.$ Combining this observation with the definition of $U$ gives $U_l=\ce^{g}_{l,l+1}(U_{l+1})$ on  the set $\{\nu_k\geq l+1\}.$
Hence,
\begin{equation}\label{eq_2_prop_stopped_process}
\I_{\{\nu_k\geq l+1\}}\ce^{g}_{l,(l+1)\wedge\nu_k}(U_{(l+1)\wedge\nu_k})=
\I_{\{\nu_k\geq l+1\}}U_l=\I_{\{\nu_k\geq l+1\}}U_{l\wedge \nu_k}.
\end{equation}
Plugging \eqref{eq_1_prop_stopped_process} and \eqref{eq_2_prop_stopped_process} in \eqref{eq_decomposition_prop_stopped_process} gives the desired result.

\end{proof}

In the following proposition we show that the stopping time $\nu_k$ defined in \eqref{nu_zero} is optimal for the optimization problem \eqref{optimal_stopping_problem_k} at time $k$ and that the value function  $V_k$ of the problem is equal to $U_k$ (the  $g$-Snell envelope
 in discrete time  of $(\xi_k)$).
\begin{theorem}\label{prop_martingale_optimality_principle_general}
For $k\in\{0,1,\ldots, T\},$
\begin{equation}\label{eq_optimal_general}
U_k=\ce^{g}_{k,\nu_k}(\xi_{\nu_k})=\esssup_{\nu\in\ct_{k,T}} \ce^{g}_{k,\nu}(\xi_\nu),
\end{equation}
where $\nu_k:=\inf\{l\in\{k,\ldots, T\}:U_l=\xi_l\}.$
\end{theorem}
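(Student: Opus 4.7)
The plan is to prove the two equalities separately, following the classical martingale optimality principle but in the nonlinear $g$-expectation setting. I will use Proposition \ref{prop_smallest_supermartingale} (which identifies $(U_k)$ as a $g$-supermartingale dominating $(\xi_k)$), Proposition \ref{prop_stopped_process} (which says $(U_{l\wedge\nu_k})$ is a $g^{\nu_k}$-martingale), and the optional-sampling Corollary \ref{cor_optional_sampling}. Note first that $\nu_k$ is a well-defined element of $\ct_{k,T}^d$ because the condition $U_T=\xi_T$ in the definition \eqref{Un} guarantees that the set $\{l\in\{k,\ldots,T\}:U_l=\xi_l\}$ is non-empty.

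\textbf{Step 1: }$U_k=\ce^{g}_{k,\nu_k}(\xi_{\nu_k})$. By Proposition \ref{prop_stopped_process}, $(U_{l\wedge \nu_k})_{l\in\{k,\ldots,T\}}$ is a $g^{\nu_k}$-martingale in discrete time. Applying Corollary \ref{cor_optional_sampling} (martingale case) with the deterministic stopping times $\sigma=k$ and $\tau=T$ to this $g^{\nu_k}$-martingale gives $U_k = U_{k\wedge \nu_k}=\ce^{g^{\nu_k}}_{k,T}(U_{T\wedge \nu_k})=\ce^{g^{\nu_k}}_{k,T}(U_{\nu_k})=\ce^{g}_{k,\nu_k}(U_{\nu_k})$, where the last equality is the definition of the $g$-conditional expectation with random terminal time $\nu_k$. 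By the very definition of $\nu_k$ we have $U_{\nu_k}=\xi_{\nu_k}$ a.s., so $U_k=\ce^{g}_{k,\nu_k}(\xi_{\nu_k})$.

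\textbf{Step 2: }$U_k\geq \ce^{g}_{k,\nu}(\xi_\nu)$ for every $\nu\in\ct_{k,T}^d$. Indeed, by Proposition \ref{prop_smallest_supermartingale}, $(U_l)$ is a $g$-supermartingale in discrete time, so applying the supermartingale part of Corollary \ref{cor_optional_sampling} with $\sigma=k\leq \nu$ yields $\ce^{g}_{k,\nu}(U_\nu)\leq U_k$. Since $U_\nu\geq \xi_\nu$ a.s.\ (the supermartingale $(U_l)$ dominates $(\xi_l)$), the monotonicity of $\ce^{g}$ under Assumption \ref{Royer} gives $\ce^{g}_{k,\nu}(\xi_\nu)\leq \ce^{g}_{k,\nu}(U_\nu)\leq U_k$. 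Taking the essential supremum over $\nu\in\ct_{k,T}^d$ yields $\esssup_{\nu\in\ct_{k,T}^d}\ce^{g}_{k,\nu}(\xi_\nu)\leq U_k$.

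\textbf{Conclusion.} Since $\nu_k\in\ct_{k,T}^d$, Step 1 gives the reverse inequality $U_k=\ce^{g}_{k,\nu_k}(\xi_{\nu_k})\leq \esssup_{\nu\in\ct_{k,T}^d}\ce^{g}_{k,\nu}(\xi_\nu)$. Combining with Step 2 proves \eqref{eq_optimal_general} and shows that $\nu_k$ attains the essential supremum.

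The argument is essentially a direct transcription of the Snell-envelope/optimality-principle proof, so there is no real obstacle once the two preparatory results (the supermartingale property of $U$ and the martingale property of the stopped process $U^{\nu_k}$) are in hand; the only subtlety is making sure to invoke the monotonicity of $\ce^{g}$ provided by Assumption \ref{Royer} when passing from the inequality $U_\nu \geq \xi_\nu$ to $\ce^{g}_{k,\nu}(U_\nu)\geq \ce^{g}_{k,\nu}(\xi_\nu)$, and to handle the random terminal time $\nu$ via the convention $g^\nu(s,y,z,\ell)=g(s,y,z,\ell)\I_{\{s\leq \nu\}}$ recalled earlier in the paper.
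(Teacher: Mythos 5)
Your proof is correct and follows essentially the same route as the paper's: Step 1 reproduces the paper's use of Proposition \ref{prop_stopped_process} together with Corollary \ref{cor_optional_sampling} (with $\sigma=k$, $\tau=T$) and $U_{\nu_k}=\xi_{\nu_k}$, and Step 2 reproduces the paper's use of Proposition \ref{prop_smallest_supermartingale}, Corollary \ref{cor_optional_sampling} (with $\sigma=k$, $\tau=\nu$), and monotonicity. No gaps; the only additions (well-definedness of $\nu_k$ and the explicit appeal to Assumption \ref{Royer}) are harmless clarifications.
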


\begin{proof}
In the case where $k=T$ the result is trivially true. Suppose $k\in\{0,1,\ldots, T-1\}$. 
By using Proposition \ref{prop_stopped_process} and Corollary \ref{cor_optional_sampling} (applied with $\sigma=k$ and $\tau=T$), and the fact that $U_{\nu_k}=\xi_{\nu_k},$ we obtain
\begin{equation}\label{eq_A_prop_martingale_optimality_principle}
U_k=U_{k\wedge\nu_k}=\ce^{g^{\nu_k}}_{k,T}(U_{T\wedge\nu_k})=\ce^{g}_{k,\nu_k}(U_{\nu_k})=\ce^{g}_{k,\nu_k}(\xi_{\nu_k}).
\end{equation}
Let $\nu\in\ct_{k,T}.$ By using Proposition \ref{prop_smallest_supermartingale} and Corollary \ref{cor_optional_sampling} (applied with  $\sigma=k$ and $\tau=\nu$), 
as well as the monotonicity of the functional $\ce^{g}_{0,\nu}(\cdot)$, we get
\begin{equation}\label{eq_B_prop_martingale_optimality_principle}
U_k\geq\ce^{g}_{k,\nu}(U_{\nu})\geq \ce^{g}_{k,\nu}(\xi_{\nu}).
\end{equation} 
Combining equations \eqref{eq_A_prop_martingale_optimality_principle} and \eqref{eq_B_prop_martingale_optimality_principle} gives the desired conclusion.
 
\end{proof}


\begin{Remark} Families of non-linear operators  $\{\ce_{t} (\cdot): t \in [0,T] \}$ indexed by a single index (as opposed to doubly-indexed families) are considered in several papers in the literature on  optimal stopping with non-linear functionals (cf., e.g., \cite{Schoe}, and \cite{Bayraktar}).  
We note that we work here with the doubly-indexed family of operators 
$\{\ce^g_{t,t'} (\cdot): t,t'\in [0,T] \}$. This family of operators reduces to the family  
$\{\ce^g_{t,T} (\cdot): t \in [0,T] \}$, indexed by a single index, under the additional assumption  
$\ce^g_{t,t'} (\eta)= \ce^g_{t,T} (\eta)$, $0\leq t \leq t'$, for all $t' \in [0,T]$, for all $\eta$ $\in$ $L^2({\cal F}_{t'})$. By using the consistency property of $g$-conditional expectations, 
it can be shown that 
this assumption is equivalent to the assumption (mentioned in the introduction)
 $\ce^g_{t,T} (\eta)= \eta$, for all $t \in [0,T]$, for all $\eta$ $\in$ $L^2({\cal F}_{t})$.
\end{Remark}


\section{ A non-zero-sum Dynkin game  in discrete time related to risk minimization}\label{sect_Dynkin}
We now consider  a game problem  which is slightly more general than that of the introduction. 
We are given two agents $A^1$ and $A^2$ whose payoffs/financial positions   are defined via four  $\F$-adapted sequences $X^1$, $X^2$, $Y^1$, $Y^2$.

We make the following assumptions:
\begin{enumerate}
\item[(A1)]$X^1\leq Y^1$ and $X^2\leq Y^2$ (that is, $X^1_k\leq Y^1_k$ and $X^2_k\leq Y^2_k$, $\forall k\in\{0,\ldots,T\}$) 
\item[(A2)] $X_T^1=Y_T^1$ and $X_T^2=Y_T^2$.  
\item[(A3)] The processes $Y^1$, $Y^2$, $X^1$, $X^2$ satisfy $\E(\max_{k\in\{0,\ldots,T\}} |Y^1_k|^2)<\infty$,
$\E(\max_{k\in\{0,\ldots,T\}} |Y^2_k|^2)<\infty$, $\E(\max_{k\in\{0,\ldots,T\}} |X^1_k|^2)<\infty$,
$\E(\max_{k\in\{0,\ldots,T\}} |X^2_k|^2)<\infty$.

\end{enumerate}

The set of strategies of each of the agents at time $0$ is $\stopo$. We emphasize that both agents use discrete stopping times as strategies. 
If the first agent's strategy is $\tau_1\in\stopo$ and the second agent's strategy is $\tau_2\in\stopo$, the payoff  of the first (resp. second) agent at time $\tau_1\wedge\tau_2$ is given by: 
$$ X^1_{\tun}\I_{\{\tun\leq \td\}} +Y^1_{\td}\I_{\{\td< \tun\}} \quad(\text{resp. }  X^2_{\td}\I_{\{\td< \tun\}} +Y^2_{\tun}\I_{\{\tun\leq \td\}}),  $$
where we have adopted the following convention:  when $\tun=\td$,    it is the first player who is responsible for stopping the game. \\

The agents $A_1$ and $A_2$ evaluate  the risk of their respective payoffs in a (possibly) different manner.

More precisely, we are now given two standard Lipschitz drivers $f_1$ and $f_2$.
 
 The dynamic risk measure of the first agent is equal to $\rho^{f_1}=-{\cal E}^{f_1}$ and the dynamic risk measure of the second agent is equal to $\rho^{f_2}=-{\cal E}^{f_2}$.

 If the first agent's strategy is $\tau_1\in\stopo$ and the second agent's strategy is $\tau_2\in\stopo$, 
 the first agent's (resp. second agent's) risk at time $0$ is thus given by
 $ -J_1(\tun,\td)$ (resp. $ -J_2(\tun,\td)$) where
$$J_1(\tun,\td):= \ce^{f_1}_{0,\tun\wedge \td}(X^1_{\tun}\I_{\{\tun\leq \td\}} +Y^1_{\td}\I_{\{\td< \tun\}})$$
$$(\text{resp. }J_2(\tun,\td):= \ce^{f_2}_{0,\tun\wedge \td}( X^2_{\td}\I_{\{\td< \tun\}} +Y^2_{\tun}\I_{\{\tun\leq \td\}})).$$  
The two agents aim at  minimizing the risk of their payoffs.

The problem with the game option presented in the introduction can be seen as a particular case of the game described above, with 
the first agent $A_1$ corresponding to the buyer of the game option, the second agent $A_2$ corresponding to the seller, and with
$X^1= -Y^2= X$ and $Y^1=-X^2= Y$. Let us emphasize that even in this particular case  where the payoffs of the two agents are equal up to a minus sign, the game is of a non-zero-sum type due to the non-linearity of the dynamic risk measures. 
The situation, also mentioned in the introduction, where the seller and/or the buyer of the option apply their risk measures to  their \emph{net gains},   also enters in the above general framework. For instance, if the seller of the option takes into account  his/her \emph{net gain}, while the buyer considers \emph{the payoff of the option} only, we set: $X^1= X$, $Y^1= Y$, $Y^2= -X +x$, and $X^2= -Y+x$, where $x>0$ is the initial price of the option.  

In this section, we investigate the question of the existence of a Nash equilibrium point for the general game described above. 
\begin{definition}(Nash equilibrium point)
A pair of  stopping times $(\tau_1^*, \tau_2^*)\in\stopo\times\stopo$ is called a \emph{Nash equilibrium point}  
for the  above non-zero-sum Dynkin game if 
 $J_1(\tau_1^*, \tau_2^*)\geq J_1(\tun, \tau_2^*)$ and $J_2(\tau_1^*, \tau_2^*)\geq J_2(\tau_1^*, \tau_2),$ for any pair $(\tau_1, \tau_2)$ 
 of stopping times in $\stopo\times\stopo$. 
\end{definition}
In other words, a pair of strategies is a Nash equilibrium of the game if any unilateral deviation from that strategy on the part of one of the agents (the other agent's strategy remaining fixed) does not reduce his/her risk.
\noindent

\subsection{A preliminary result}
We begin by a preliminary proposition in which
we show that interchanging the strict and large inequalities in the expression of the payoff process does not change the corresponding value functions. This result will be used in the construction of a Nash equilibrium point in the following sub-section.
\begin{proposition}\label{equality_value_functions} 
Let $(X_k)$ and $(Y_k)$ be two $\F^d$-adapted sequences of square-integrable random variables such that $X_k\leq Y_k$, for all $k\in\{0,\ldots,T\}$, $X_T=Y_T$ and $\E(\max_{k\in\{0,\ldots,T\}} |X_k|^2)<\infty$.  Let $g$ be a standard driver and $\mu\in\stopo$.
For each $k\in\{0,1,\ldots, T\}$, let 
$$\bar\xi_k:= X_{k}\I_{\{k\leq \mu\}} +Y_{\mu}\I_{\{\mu< k\}} \quad {\rm and} \quad \xi_k:= X_{k}\I_{\{k< \mu\}} +Y_{\mu}\I_{\{\mu\leq k\}}.$$ 
For each $k\in\{0,1,\ldots, T\}$, let 
\begin{equation*}
\bar U_k:=\esssup_{\tau\in\stopt} \ce^{g}_{k,\tau \wedge \mu}(\bar\xi_\tau)\quad {\rm and} \quad  
 U_k:=\esssup_{\tau\in\stopt} \ce^{g}_{k,\tau \wedge \mu}(\xi_\tau),
\end{equation*}
which correspond to the $g^\mu$-Snell envelope in discrete time of $(\bar\xi_k)$ and $(\xi_k)$ respectively.

  Then, the following properties hold true: 
\begin{enumerate}
\item[(i)] $\bar U_k=Y_\mu=U_k$ a.s. on $\{\mu\leq k-1\}$.
\item[(ii)] $\bar U_k=U_k$ a.s. for all $k\in\{0,\ldots,T\}.$
\end{enumerate}
\end{proposition}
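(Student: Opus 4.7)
This follows directly from the definitions. On $\{\mu \leq k-1\}$, every $\tau \in \stopt$ satisfies $\tau \geq k > \mu$, so $\tau \wedge \mu = \mu$ and, since $\mu < \tau$, both $\bar\xi_\tau$ and $\xi_\tau$ collapse to $Y_\mu$. Because $Y_\mu$ is $\cf_\mu$-measurable and $k \geq \mu$ on the set considered, the convention $\ce^g_{t,\sigma}(\eta) = \eta$ on $\{t \geq \sigma\}$ recalled in Section~\ref{defin_sect} gives $\ce^g_{k,\mu}(Y_\mu) = Y_\mu$. Passing to the essential supremum in $\tau$ yields $\bar U_k = Y_\mu = U_k$ on $\{\mu \leq k-1\}$.

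\textbf{Part (ii), the easy inequality $\bar U_k \leq U_k$.} A pointwise computation shows
\[
\xi_k - \bar\xi_k = (Y_\mu - X_k)\I_{\{\mu = k\}} = (Y_k - X_k)\I_{\{\mu=k\}} \geq 0
\]
for each $k$ (using $X_k \leq Y_k$ and $Y_\mu = Y_k$ on $\{\mu=k\}$). Hence $\bar\xi_\tau \leq \xi_\tau$ for every $\tau \in \stopt$, and monotonicity of the operator $\ce^g_{k,\tau\wedge\mu}(\cdot) = \ce^{g^{\tau\wedge\mu}}_{k,T}(\cdot)$ (a consequence of Assumption~\ref{Royer}) together with an essential supremum over $\tau$ gives $\bar U_k \leq U_k$.

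\textbf{Part (ii), the reverse inequality $U_k \leq \bar U_k$.} The idea is, given $\tau \in \stopt$, to perturb it on the event $\{\tau = \mu\}$ by shifting one step forward so that the ``$X$-payoff'' of $\bar\xi$ is replaced by the ``$Y$-payoff''. Concretely, set
\[
\tau' := \tau\,\I_{\{\tau \neq \mu\}} + ((\tau+1)\wedge T)\,\I_{\{\tau = \mu\}}.
\]
Since $\{\tau \leq j\}$ and $\{\tau = \mu \leq j\}$ both belong to $\cf_j$ for every $j$, a short verification of $\{\tau'\leq j\}\in\cf_j$ shows that $\tau' \in \stopt$. A case analysis on the three disjoint events $\{\tau < \mu\}$, $\{\tau = \mu\}$ and $\{\tau > \mu\}$ then yields $\tau' \wedge \mu = \tau \wedge \mu$ together with $\bar\xi_{\tau'} = \xi_\tau$: on $\{\tau < \mu\}$ and $\{\tau > \mu\}$ one has $\tau'=\tau$ and both payoffs coincide (with $X_\tau$ and $Y_\mu$ respectively); on $\{\tau = \mu < T\}$ the shift produces $\tau' = \mu+1 > \mu$, so $\bar\xi_{\tau'} = Y_\mu = \xi_\tau$; finally, on $\{\tau=\mu=T\}$ the hypothesis $X_T = Y_T$ forces $\bar\xi_T = Y_T = \xi_T$ directly. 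Consequently $\ce^g_{k,\tau\wedge\mu}(\xi_\tau) = \ce^g_{k,\tau'\wedge\mu}(\bar\xi_{\tau'}) \leq \bar U_k$, and taking the essential supremum over $\tau$ yields $U_k \leq \bar U_k$.

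The main delicate point is the construction of $\tau'$: one has to check it is indeed a stopping time in $\stopt$ and to handle the boundary case $\tau=\mu=T$, where a literal shift by one would exit the grid, cleanly using the assumption $X_T=Y_T$. Once this is in place, everything else is a direct consequence of the definitions of $\bar\xi_k$ and $\xi_k$ and of the monotonicity of the $g$-conditional expectation.
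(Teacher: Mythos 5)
Your proof is correct, but it takes a genuinely different route from the paper's. The paper proves both parts by backward induction in $k$: it works with the dynamic-programming recursion $U_k=\max\bigl(\xi_k;\ce^{g^\mu}_{k,k+1}(U_{k+1})\bigr)$ (i.e. with the identification of the essential suprema as $g^\mu$-Snell envelopes coming from Theorem \ref{prop_martingale_optimality_principle_general}), and at each step it localizes the BSDE on the sets $\{\mu\leq k-1\}$ and $\{\mu=k\}$ via Proposition \ref{zero-one law general}, exploiting that the stopped driver $g^\mu\I_{\{\mu\leq k-1\}}$ vanishes so that the $g$-conditional expectation acts as the identity; the hypothesis $X_k\leq Y_k$ then enters through the final $\max(X_k;Y_k)=Y_k$ on $\{\mu=k\}$. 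You instead argue directly from the essential-supremum formulas: for (i) you evaluate every member of the family on $\{\mu\leq k-1\}$ using the convention $\ce^g_{t,\sigma}(\eta)=\eta$ on $\{t\geq\sigma\}$ and localize the essential supremum; for (ii) you get $\bar U_k\leq U_k$ from the pointwise inequality $\bar\xi\leq\xi$ plus the comparison/monotonicity property (valid for the stopped driver under Assumption \ref{Royer}), and $U_k\leq\bar U_k$ by the change of strategy $\tau\mapsto\tau'=\tau\I_{\{\tau\neq\mu\}}+((\tau+1)\wedge T)\I_{\{\tau=\mu\}}$, which preserves $\tau\wedge\mu$, turns $\xi_\tau$ into $\bar\xi_{\tau'}$, and handles the boundary case $\tau=\mu=T$ through $X_T=Y_T$; your verifications that $\tau'\in\stopt$ and of the case analysis are accurate. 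Your approach buys economy of means: it needs neither the Snell-envelope recursion nor Proposition \ref{zero-one law general}, and the one-step-shift argument is a transparent explanation of why exchanging strict and large inequalities does not change the value. The paper's induction buys consistency with the rest of the machinery (the same recursion and localization lemma are reused in the construction of the Nash equilibrium) and yields along the way the explicit identities $U_k=\bar U_k=Y_k$ on $\{\mu=k\}$, at the cost of invoking the dynamic-programming characterization. Only cosmetic care is needed in your write-up: in (i) the identity should be stated as $\ce^g_{k,\tau\wedge\mu}(\bar\xi_\tau)=\bar\xi_\tau=Y_\mu$ a.s. on $\{\mu\leq k-1\}$ (rather than the shorthand $\ce^g_{k,\mu}(Y_\mu)=Y_\mu$), and the passage to the essential supremum on an event deserves the one-line standard localization argument; neither point is a gap.
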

\begin{Remark}
 The result can be seen as an analogue in our framework of a result 
 by \cite{Maingue}(lemma 5) shown  in a continuous-time framework with right-continuous payoffs and classical expectation.
\end{Remark}

\begin{proof}
Let us prove $(i)$. We proceed by backward induction. A direct computation gives
$$U_T=\xi_T=Y_\mu  \quad {\rm and} \quad \bar U_T=\bar\xi_T=Y_\mu \quad \quad {\rm on} \quad \{\mu\leq T-1\}.$$
We suppose now that $\bar U_{k+1}=Y_\mu=U_{k+1}$ a.s. on $\{\mu\leq k\}$.\\
We show that  
$\bar U_k = Y_\mu=U_k$  a.s. on $\{\mu\leq k-1\}$.\\
By using the definition of $U_k$, we have
\begin{equation}\label{eq1_equality_value_functions}
U_k\I_{\{\mu \leq k-1\}}=\max\big(\xi_k \I_{\{\mu \leq k-1\}}; \I_{\{\mu \leq k-1\}}\ce^{g^\mu}_{k,k+1}(U_{k+1})\big).
\end{equation}
Now, by the definition of $\xi_k$,  we get 
\begin{equation}\label{eq1bis_equality_value_functions}
\xi_k \I_{\{\mu \leq k-1\}}= Y_\mu \I_{\{\mu \leq k-1\}}\quad 
\end{equation}
Moreover, by Proposition \ref{zero-one law general}  and the induction hypothesis, we get 
$$\I_{\{\mu \leq k-1\}}\ce^{g^\mu}_{k,k+1}(U_{k+1})=  \ce^{g^\mu\I_{\{\mu \leq k-1\}}}_{k,k+1}(U_{k+1}\I_{\{\mu \leq k-1\}})=\ce^{g^\mu\I_{\{\mu \leq k-1\}}}_{k,k+1}
(Y_\mu\I_{\{\mu \leq k-1\}})\quad $$
Now, the "driver" $g^\mu(\omega,s,y,z)\I_{\{\mu(\omega) \leq k-1\}}$ is equal to $g(\omega,s,y,z)\I_{\{s\leq \mu(\omega)\leq k-1\}}\I_{]k,T]}(s)$ (according to the convention used in Prop. \ref{zero-one law general}), which is equal to zero.  Moreover,  $Y_\mu\I_{\{\mu \leq k-1\}}$
is $\cf_k$-measurable. Therefore,
$\ce^{g^\mu\I_{\{\mu \leq k-1\}}}_{k,k+1}
(Y_\mu\I_{\{\mu \leq k-1\}}) =Y_\mu\I_{\{\mu \leq k-1\}}$  a.s.  
By combining this observation with equations \eqref{eq1_equality_value_functions} and \eqref{eq1bis_equality_value_functions}, we get
$U_k = Y_\mu$  a.s. on $\{\mu\leq k-1\}$.
By similar arguments we show that  
 $\bar U_k= Y_\mu$  a.s. on $\{\mu\leq k-1\}$ (to obtain this claim, it is sufficient to replace $U$ by $\bar U$, and $\xi$ by $\bar \xi$ in equation \eqref{eq1_equality_value_functions}). 
Property $(i)$ is thus proven.\\
Let us prove property $(ii)$. 
We proceed again by backward induction.
At the final time $T$ we have  $U_T=\xi_T=\bar\xi_T=\bar U_T$ (due to the assumption $X_T=Y_T$).
Suppose that $\bar U_{k+1}=U_{k+1}.$ 
Let us prove $\bar U_{k}=U_{k}.$ 
We note that $\xi_k=\bar\xi_k$ on the set $\{\mu=k\}^c.$ This observation, the induction hypothesis, and the definitions of $U_k$ and $\bar U_k$   lead to the equality 
$U_k=\bar U_k$ on the set $\{\mu=k\}^c.$ It remains to show that the equality also holds true on the set $\{\mu=k\}$.
By using the definition of $U_k$, the definition of $\xi_k$ and
Proposition \ref{zero-one law general}, 
we get
\begin{equation}\label{eq2_equality_value_functions}
\begin{aligned}
U_k\I_{\{\mu=k\}}&=\max\big(\xi_k \I_{\{\mu=k\}}; \I_{\{\mu=k\}}\ce^{g^\mu}_{k,k+1}(U_{k+1})\big)\\
&=
\max\big(Y_k \I_{\{\mu=k\}}; \ce^{g^\mu\I_{\{\mu=k\}}}_{k,k+1}(U_{k+1}\I_{\{\mu=k\}})\big).
\end{aligned}
\end{equation}
Now, by property $(i)$ which we have just proved, we have 
$$\ce^{g^\mu\I_{\{\mu=k\}}}_{k,k+1}(U_{k+1}\I_{\{\mu=k\}})=\ce^{g^\mu\I_{\{\mu=k\}}}_{k,k+1}(Y_k\I_{\{\mu=k\}}).
$$ 
As the "driver" $g^\mu\I_{\{\mu=k\}}$ is equal to $0$ and $Y_k\I_{\{\mu=k\}}$ is 
${\cal F}_k$-measurable,
$$\ce^{g^\mu\I_{\{\mu=k\}}}_{k,k+1}(Y_k\I_{\{\mu=k\}})
=\ce^{0}_{k,k+1}(Y_k\I_{\{\mu=k\}})=
Y_k\I_{\{\mu=k\}}.$$
From the previous three expressions, we obtain
$U_k\I_{\{\mu=k\}}=Y_k \I_{\{\mu=k\}}.$ 
Similarly, by using the definition of $\bar U_k$, the definition of $\bar \xi_k$, 
Proposition \ref{zero-one law general}, and property $(i)$,
we get 
\begin{equation*}
\begin{aligned}
\bar U_k\I_{\{\mu=k\}}&=\max\big(\bar \xi_k \I_{\{\mu=k\}}; \I_{\{\mu=k\}}\ce^{g^\mu}_{k,k+1}(\bar U_{k+1})\big)\\
&=
\max\big(X_k \I_{\{\mu=k\}}; \ce^{g^\mu\I_{\{\mu=k\}}}_{k,k+1}(\bar U_{k+1}\I_{\{\mu=k\}})\big)\\
&=\max\big(X_k \I_{\{\mu=k\}}; \ce^{g^\mu\I_{\{\mu=k\}}}_{k,k+1}(Y_k\I_{\{\mu=k\}})\big), 
\end{aligned}
\end{equation*}
Moreover, $\ce^{g^\mu\I_{\{\mu=k\}}}_{k,k+1}(Y_k\I_{\{\mu=k\}})=\ce^{0}_{k,k+1}(Y_k\I_{\{\mu=k\}}) 
=Y_k\I_{\{\mu=k\}}$.
Hence, $\bar U_k\I_{\{\mu=k\}}= \max (X_k; Y_k)  \I_{\{\mu=k\}}$.
As $X_k\leq Y_k$ by assumption, we obtain
$\bar U_k\I_{\{\mu=k\}}=Y_k\I_{\{\mu=k\}}.$ \\
Thus, 
the equality $\bar U_k\I_{\{\mu=k\}}=U_k\I_{\{\mu=k\}}$ holds,  which concludes the proof.
\end{proof}

\subsection{Construction of a Nash equilibrium point}
  
Following ideas of \cite{Hamadene}, and \cite{Hamadene2}, we  construct a Nash equilibrium point of the game described above by a recursive procedure. We also rely on the preliminary result of the previous subsection (Prop. \ref{equality_value_functions}) and on the results of Section \ref{sect_optimal}.
\begin{theorem}
Our non-zero-sum game with $g$-expectations in discrete time admits a Nash equilibrium point. 
\end{theorem}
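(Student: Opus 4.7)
The plan is to adapt the backward recursive construction of Hamad\`ene and Zhang to the $g$-expectation framework. The building blocks are the optimal stopping results of Section \ref{sect_optimal} (used to verify each agent's best-response inequality) together with Proposition \ref{equality_value_functions} (used to reconcile the strict-versus-large indicator in each agent's payoff that arises from the tie-breaking rule).

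First, I would construct by backward induction two $\F^d$-adapted square-integrable sequences $(U^1_k)$ and $(U^2_k)$ that will play the role of the Nash values from each date onwards. Set $U^1_T := X^1_T = Y^1_T$ and $U^2_T := X^2_T = Y^2_T$ (using (A2)). Given $U^1_{k+1}$ and $U^2_{k+1}$, introduce the continuation values $\alpha^i_k := \ce^{f_i}_{k,k+1}(U^i_{k+1})$ and the $\cf_k$-measurable stopping regions $A_k := \{X^1_k \geq \alpha^1_k\}$ and $B_k := \{X^2_k \geq \alpha^2_k\}$. A pointwise analysis of the resulting one-step $2 \times 2$ game (under the tie-breaking convention that agent $1$ is responsible for stopping on $\{\tau_1=\tau_2\}$, which selects the (Stop, Continue) equilibrium on $A_k \cap B_k$) motivates the definitions
\[
U^1_k := X^1_k \I_{A_k} + Y^1_k \I_{A_k^c \cap B_k} + \alpha^1_k \I_{A_k^c \cap B_k^c},
\]
\[
U^2_k := Y^2_k \I_{A_k} + X^2_k \I_{A_k^c \cap B_k} + \alpha^2_k \I_{A_k^c \cap B_k^c},
\]
along with the candidate equilibrium $\tau_1^* := \inf\{k : A_k\}$ and $\tau_2^* := \inf\{k : A_k^c \cap B_k\}$, with $\inf \emptyset := T$ and the horizon acting as a forced stopping date, so $\tau_i^* \in \stopo$. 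By design, $\tau_1^* \neq \tau_2^*$ except possibly at $T$, so the tie-breaking rule is never invoked in equilibrium play.

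Next I would show $U^i_0 = J_i(\tau_1^*, \tau_2^*)$ for $i=1,2$. On $\{k < \tau_1^* \wedge \tau_2^*\}$ both $A_k$ and $A_k^c \cap B_k$ fail, hence $A_k^c \cap B_k^c$ holds and $U^i_k = \alpha^i_k = \ce^{f_i}_{k,k+1}(U^i_{k+1})$. The stopped sequence $(U^i_{k \wedge \tau_1^* \wedge \tau_2^*})_k$ is therefore an $f_i^{\tau_1^* \wedge \tau_2^*}$-martingale in discrete time in the sense of Definition \ref{def_supermartingale}, and Corollary \ref{cor_optional_sampling} (applied with $\sigma = 0$ and $\tau = \tau_1^* \wedge \tau_2^*$) yields $U^i_0 = \ce^{f_i}_{0, \tau_1^* \wedge \tau_2^*}(U^i_{\tau_1^* \wedge \tau_2^*})$. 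A direct case split on $\{\tau_1^* \leq \tau_2^*\}$ versus $\{\tau_2^* < \tau_1^*\}$ then identifies $U^i_{\tau_1^* \wedge \tau_2^*}$ with the actual payoff received by agent $i$ under $(\tau_1^*, \tau_2^*)$.

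Finally, I would verify the two Nash inequalities using the single-agent theory of Section \ref{sect_optimal}. Fixing $\tau_2 = \tau_2^*$, agent $1$'s best-response problem reads
\[
\sup_{\tau_1 \in \stopo} \ce^{f_1}_{0, \tau_1 \wedge \tau_2^*}(\bar\xi^1_{\tau_1}), \qquad \bar\xi^1_k := X^1_k \I_{\{k \leq \tau_2^*\}} + Y^1_{\tau_2^*} \I_{\{\tau_2^* < k\}},
\]
the large inequality reflecting the tie-breaking convention in agent $1$'s favor. Since $\ce^{f_1}_{0, \tau_1 \wedge \tau_2^*} = \ce^{f_1^{\tau_2^*}}_{0, \tau_1}$, this fits the framework of Section \ref{sect_optimal} with driver $f_1^{\tau_2^*}$, so Theorem \ref{prop_martingale_optimality_principle_general} yields the optimal value $\bar U^1_0$, where $\bar U^1$ is the $f_1^{\tau_2^*}$-Snell envelope of $\bar\xi^1$. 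A backward induction — Proposition \ref{equality_value_functions}(i) to determine $\bar U^1$ on $\{\tau_2^* < k\}$, and the ordinary Snell recursion on $\{\tau_2^* \geq k\}$ — identifies $\bar U^1$ with $U^1$ on $\{\tau_2^* \geq k\}$, so that $\bar U^1_0 = U^1_0 = J_1(\tau_1^*, \tau_2^*)$, which is the first Nash inequality. For agent $2$, the relevant payoff indicator $\{\tau_2 < \tau_1^*\}$ is strict (agent $2$ loses ties), so Proposition \ref{equality_value_functions}(ii) is first invoked to replace strict by large inequality before running the analogous Snell-envelope comparison. The hardest part is precisely this diagonal alignment: because the two agents face opposite indicator conventions at the tie events $\{k = \tau_{-i}^*\}$, a naive application of the single-agent theorem would not match the recursion defining $U^i$, and Proposition \ref{equality_value_functions} is the tool that collapses the difference.
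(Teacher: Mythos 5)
Your construction is correct, but it takes a genuinely different route from the paper. The paper follows the Hamad\`ene--Zhang scheme: starting from $\tau_1=\tau_2=T$ it builds, by alternating best responses (each computed with the single-agent theory of Section \ref{sect_optimal} and reconciled with the tie-breaking indicator via Proposition \ref{equality_value_functions}), two non-increasing sequences of stopping times $(\tau_{2n+1})$, $(\tau_{2n+2})$, proves the key monotonicity $\tilde{\tau}_{m+2}\le\tau_m$, and passes to the limit using the fact that a convergent sequence of $\{0,\dots,T\}$-valued stopping times is a.s.\ stationary (Proposition \ref{prop_convergence_I}). You instead run a single backward induction that defines the equilibrium value processes $U^1,U^2$ and the stopping regions $A_k$, $B_k$ simultaneously from the one-step $2\times 2$ games, and then verify the two Nash inequalities by showing that the $f_i^{\tau^*}$-Snell envelope of agent $i$'s reward against the frozen opponent coincides with $U^i$ on $\{\tau^*\ge k\}$ (where $\tau^*$ denotes the other agent's equilibrium time). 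Both arguments rest on the same ingredients --- the localization Proposition \ref{zero-one law general}, Theorem \ref{prop_martingale_optimality_principle_general} together with Corollary \ref{cor_optional_sampling}, and assumptions (A1)--(A2), which in your version are exactly what guarantee that each one-step game admits a pure equilibrium (being stopped on is weakly preferred to stopping, so the selection (Stop, Continue) on $A_k$ is consistent). Your route is more elementary in that it produces the equilibrium in one pass over $\{0,\dots,T\}$ and dispenses entirely with the limit argument, which the paper itself flags as the step that does not survive outside the discrete setting; the paper's iterative route is the one that carries over to the extensions of Section \ref{sect_develop} and to continuous time, which is presumably why the authors adopt it. Two details to make explicit when writing your version up: every set on which you localize before applying Proposition \ref{zero-one law general} (such as $\{\tau^*\ge k+1\}$, $A_k$, $A_k^c\cap B_k^c$) must be checked to be $\cf_k$-measurable, and at the terminal date the convention $A_T=\Omega$ combined with (A2) is what renders the forced tie at $T$ harmless.
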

To prove this theorem, 
we construct a pair $(\tau_{2n+1},\tau_{2n+2})_{n\in\N}$ of non-increasing sequences of stopping times and we show that the limit (as $n\rightarrow\infty$) is a NEP of the  game defined in Section \ref{defin_sect}.\\   
We set $\tun:=T$ and $\td:=T$. We suppose that the stopping times $\tau_{2n-1}$ and $\tau_{2n}$ have been defined.\\
We define $\fex(\omega,t,\cdot,\cdot, \cdot):=f^{\tau_{2n}}_1(\omega,t,\cdot,\cdot,\cdot):=f_1(\omega,t,\cdot,\cdot,\cdot)\I_{\{t\leq \tau_{2n}(\omega)\}}$. We note that $\fex$ is a standard Lipschitz  driver, as $f_1$ is a standard Lipschitz driver.\\ 
We set, for all $k\in\{0,\ldots, T\}$,
\begin{equation}\label{eq_definitions}
\begin{aligned}
\xiex_k&:= X^1_{k}\I_{\{k< \tdn\}} +Y^1_{\tdn}\I_{\{\tdn\leq k\}}  \\
W_k^{2n+1}&:=\esssup_{\tau\in\stopt}\ce_{k,\tau}^{\fex}(\xiex_\tau)\\ 
\ttil_{2n+1}&:=\inf\{k\in\{0,\ldots, T\}: W_k^{2n+1}=\xiex_k\}\\
\tau_{2n+1}&:=(\ttil_{2n+1}\wedge \tau_{2n-1})\I_{\{\ttil_{2n+1}\wedge \tau_{2n-1}<\tau_{2n}\}}+\tau_{2n-1}\I_{\{\ttil_{2n+1}\wedge \tau_{2n-1}\geq\tau_{2n}\}}.
\end{aligned}
\end{equation}
Due to Theorem \ref{prop_martingale_optimality_principle_general}, the stopping time $\ttil_{2n+1}$ is optimal for the above optimization problem \eqref{eq_definitions} at time $0$: more precisely, we have
$W_0^{2n+1}=\sup_{\tau\in\stopo}\ce_{0,\tau}^{\fex}(\xiex_\tau)= \ce_{0,\ttil_{2n+1}}^{\fex}(\xiex_{\ttil_{2n+1}})$. Moreover, 
thanks to Proposition \ref{equality_value_functions} 
 applied with $\mu:=\tau_{2n}$ and $g:=f_1$, we have 
\begin{equation}\label{equation_equality_values}
W_0^{2n+1}=\sup_{\tau\in\stopo} J_1(\tau, \tau_{2n}).
\end{equation}
We gather  some  more observations on the objects defined above in the following Remark \ref{The first remark} and Proposition  \ref{prop_properties_optimal_stopping_time}.

\begin{Remark}\label{The first remark}
\begin{enumerate}
\item[(i)]For all $n\in\N,$ $\tau_{2n+1}$ is a stopping time (this observation follows directly from the definition of $\tau_{2n+1}$). 
\item[(ii)]For all $n\in\N$, for all $\tau\in\stopo$, $\xiex_\tau=\xiex_{\tau\wedge\tdn}$.
\end{enumerate}
\end{Remark}
Moreover, we have the following proposition.
\begin{proposition}\label{prop_properties_optimal_stopping_time}
\begin{enumerate}
\item[(i)]$W_k^{2n+1}\I_{\{\tau_{2n}\leq k\}}=Y^1_{\tdn}\I_{\{\tau_{2n}\leq k\}}.$
\item[(ii)]For all $n\in\N,$ $\ttil_{2n+1}=\inf\{k\in\{0,\ldots, T\}:W_k^{2n+1}=X_k^1\}\wedge\tdn.$ In particular, $\ttil_{2n+1}\leq\tdn,$ for all $n\in\N$. 
\end{enumerate}
\end{proposition}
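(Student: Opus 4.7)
My plan is to reduce (i) to Proposition \ref{equality_value_functions}(i), applied with the identifications $\mu:=\tdn$, $g:=f_1$, $X:=X^1$, $Y:=Y^1$. The decisive preliminary step is to recognise $W_k^{2n+1}$ as the $g^\mu$-Snell envelope $U_k$ considered there. This rests on two observations: (a) by the very definition of the BSDE with a stopping time as terminal time, $\ce^{f_1^{\tdn}}_{k,\tau}(\eta)=\ce^{f_1}_{k,\tau\wedge\tdn}(\eta)$ for any suitable $\eta$; and (b) Remark \ref{The first remark}(ii) gives $\xiex_\tau=\xiex_{\tau\wedge\tdn}$, which in particular makes this random variable $\cf_{\tau\wedge\tdn}$-measurable. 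Combining (a) and (b),
\[
W_k^{2n+1}=\esssup_{\tau\in\stopt}\ce^{f_1}_{k,\tau\wedge\tdn}(\xiex_{\tau\wedge\tdn})=U_k.
\]

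For (i), Proposition \ref{equality_value_functions}(i) then immediately yields $W_k^{2n+1}=Y^1_{\tdn}$ on $\{\tdn\leq k-1\}$. The remaining slice $\{\tdn=k\}$ must be handled directly: on this event, $\xiex_k=Y^1_{\tdn}$ and, for every $\tau\in\stopt$, $\tau\wedge\tdn=k$ a.s., so $\ce^{f_1}_{k,\tau\wedge\tdn}(\xiex_{\tau\wedge\tdn})=\xiex_k=Y^1_{\tdn}$; taking the essential supremum gives $W_k^{2n+1}=Y^1_{\tdn}$ on $\{\tdn=k\}$ as well. Putting the two cases together establishes (i).

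Part (ii) should then follow directly from (i) and the form of $\xiex_k$. By (i), $\xiex_k=Y^1_{\tdn}=W_k^{2n+1}$ on $\{\tdn\leq k\}$, so the stopping set $\{W_k^{2n+1}=\xiex_k\}$ contains $\{\tdn\leq k\}$, whence $\ttil_{2n+1}\leq\tdn$. On the complementary event $\{k<\tdn\}$, $\xiex_k=X^1_k$, so the condition $W_k^{2n+1}=\xiex_k$ is equivalent to $W_k^{2n+1}=X^1_k$. Combining these two facts produces
\[
\ttil_{2n+1}=\inf\{k\in\{0,\ldots,T\}:W_k^{2n+1}=X^1_k\}\wedge\tdn,
\]
which is exactly (ii).

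The main obstacle I expect is the clean execution of the identification $W_k^{2n+1}=U_k$: one has to manipulate the driver $f_1^{\tdn}$ together with a variable terminal time $\tau$, and it is essential to invoke Remark \ref{The first remark}(ii) to be sure that $\xiex_{\tau\wedge\tdn}$ is $\cf_{\tau\wedge\tdn}$-measurable so that Proposition \ref{equality_value_functions} genuinely applies. Once this reduction is secured, everything else is a routine case split, with the only additional subtlety being the boundary analysis on $\{\tdn=k\}$.
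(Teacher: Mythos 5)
Your proof is correct, but it takes a partly different route from the paper's. The paper proves (i) in a single stroke: for every $\tau\in\stopt$, on the set $\{\tdn\leq k\}$ the effective terminal time $\tau\wedge\tdn$ is $\leq k$, so by the convention $\ce^{g}_{t,\sigma}(\eta)=\eta$ a.s. on $\{t\geq\sigma\}$ one gets $\ce^{\fex}_{k,\tau\wedge\tdn}(\xiex_\tau)=\xiex_\tau=Y^1_{\tdn}$ there, and the conclusion follows by taking the essential supremum (localized on $\{\tdn\leq k\}$). You instead first identify $W^{2n+1}_k$ with the envelope $U_k$ of Proposition \ref{equality_value_functions} (using $\ce^{f_1^{\tdn}}_{k,\tau}=\ce^{f_1}_{k,\tau\wedge\tdn}$ together with Remark \ref{The first remark}(ii) and the adaptedness of $\xiex$ — an identification the paper itself makes implicitly when deriving \eqref{equation_equality_values}, so it is legitimate), invoke Proposition \ref{equality_value_functions}(i) to cover $\{\tdn\leq k-1\}$, and then treat the boundary slice $\{\tdn=k\}$ by exactly the direct argument above. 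This is sound, and it has the merit of recycling an already-proved lemma; note, however, that your boundary argument works verbatim on the whole of $\{\tdn\leq k\}$ (nothing in it uses $\tdn=k$ beyond $\tau\wedge\tdn\leq k$), so the detour through Proposition \ref{equality_value_functions}(i) is dispensable and the paper's proof is just your boundary case applied globally. Your derivation of (ii) — the stopping set $\{W^{2n+1}_k=\xiex_k\}$ contains $\{\tdn\leq k\}$ by (i), while on $\{k<\tdn\}$ the condition reads $W^{2n+1}_k=X^1_k$ — is in substance the same index-set decomposition as in the paper.
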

\begin{proof} Let us prove $(i).$ From the definition of $\xi^{2n+1}$ we have 
\begin{equation}\label{prop_eq_1}
\xi^{2n+1}_\tau \I_{\{\tau_{2n}\leq k\}}= Y^1_{\tau_{2n}} \I_{\{\tau_{2n}\leq k\}}, \text{ for all } \tau \in\stopt.
\end{equation} 
Due to the definition of the solution of a standard BSDE with a stopping time as a terminal time, we have
\begin{equation}\label{prop_eq_2}
\I_{\{\tau_{2n}\leq k\}}\ce^{\fex}_{k, \tau\wedge \tau_{2n}}(\xi^{2n+1}_\tau)=  \I_{\{\tau_{2n}\leq k\}}\xi^{2n+1}_\tau, \text{ for all } \tau \in\stopt. 
\end{equation} 
Thus, 
\begin{equation*}
\begin{aligned}
\I_{\{\tau_{2n}\leq k\}}W_k^{2n+1}&=\I_{\{\tau_{2n}\leq k\}}\esssup_{\tau\in\stopt}\ce^{\fex}_{k, \tau\wedge \tau_{2n}}(\xi^{2n+1}_\tau)\\
&= \esssup_{\tau\in\stopt}\I_{\{\tau_{2n}\leq k\}}\ce^{\fex}_{k, \tau\wedge \tau_{2n}}(\xi^{2n+1}_\tau)
= \I_{\{\tau_{2n}\leq k\}} \xi^{2n+1}_\tau= \I_{\{\tau_{2n}\leq k\}} Y^1_ {\tau_{2n}},
\end{aligned}
\end{equation*}
where we have used equation \eqref{prop_eq_2} to obtain the last but one equality, and equation \eqref{prop_eq_1} to obtain the last. \\
\noindent
Let us prove $(ii)$. 
By using the definition of $\ttil_{2n+1}$, and that of $\xiex$, we have
$$\ttil_{2n+1}=\inf\{k\in\ld:W_k^{2n+1}\I_{\{k<\tau_{2n}\}}+W_k^{2n+1}\I_{\{\tau_{2n}\leq k\}}=X_k^{1}\I_{\{k<\tau_{2n}\}}+Y^1_{\tdn}\I_{\{\tau_{2n}\leq k\}}\}.$$
Thanks to this observation and to the previous property $(i)$, we obtain
\begin{equation*}
\begin{aligned}
\ttil_{2n+1}&=\inf\{k\in\ld:W_k^{2n+1}\I_{\{k<\tau_{2n}\}}=X_k^{1}\I_{\{k<\tau_{2n}\}}\}\\
&=\inf\{k\in\ld:W_k^{2n+1}=X_k^1\}\wedge\tdn.
\end{aligned}
\end{equation*}
\end{proof}

We define $f_2^{2n+2}(\omega,t,\cdot,\cdot, \cdot):=f_2^{\tau_{2n+1}}(\omega,t,\cdot,\cdot,\cdot):=f_2(\omega,t,\cdot,\cdot,\cdot)\I_{\{t\leq \tau_{2n+1}(\omega)\}}$. Similarly to the definitions of \eqref{eq_definitions}, we set
\begin{equation}\label{eq_definitions_2}
\begin{aligned}
\xi_k^{2n+2}&:= X^2_{k}\I_{\{k< \tau_{2n+1}\}} +Y^2_{\tau_{2n+1}}\I_{\{\tau_{2n+1}\leq k\}}  \\
W_k^{2n+2}&:=\esssup_{\tau\in\stopt}\ce_{k,\tau}^{f_2^{2n+2}}(\xi^{2n+2}_\tau)\\ 
\ttil_{2n+2}&:=\inf\{k\in\ld: W_k^{2n+2}=\xi^{2n+2}_k\}\\
\tau_{2n+2}&:=(\ttil_{2n+2}\wedge \tau_{2n})\I_{\{\ttil_{2n+2}\wedge \tau_{2n}<\tau_{2n+1}\}}+\tau_{2n}\I_{\{\ttil_{2n+2}\wedge \tau_{2n}\geq\tau_{2n+1}\}}.
\end{aligned}
\end{equation}
We note that the objects defined in the previous equation \eqref{eq_definitions_2} satisfy properties analogous to those of Remark \ref{The first remark} and Proposition \ref{prop_properties_optimal_stopping_time}.

\begin{proposition}\label{prop_link_ttil_and_tau}
For all $m\geq 1$, $\ttil_{m+2}\leq \tau_m\; P\text{-a.s.}.$
\end{proposition}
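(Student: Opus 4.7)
The plan is to argue by strong induction on $m$, splitting according to the two branches in the recursive definition of $\tau_{m+1}$, and reducing the hard branch to the previous step via pathwise coincidence of the optimal stopping problems.

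The base cases $m\in\{1,2\}$ are immediate because $\tau_1=\tau_2=T$, so any stopping time is $\leq\tau_m=T$. For the induction step at $m\geq 3$, I would first establish the auxiliary inequality $\tilde\tau_m\leq\tau_m$: the induction hypothesis applied to $m'=m-2$ gives $\tilde\tau_m\leq\tau_{m-2}$, so that $\tilde\tau_m\wedge\tau_{m-2}=\tilde\tau_m$. Inspecting the defining formula of $\tau_m$ (the direct analogue of the formula defining $\tau_{2n+1}$), one finds $\tau_m=\tilde\tau_m$ on $\{\tilde\tau_m<\tau_{m-1}\}$ and $\tau_m=\tau_{m-2}\geq\tilde\tau_m$ on its complement; in both cases $\tilde\tau_m\leq\tau_m$.

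Next, by (the even-indexed analogue of) Proposition \ref{prop_properties_optimal_stopping_time}(ii), $\tilde\tau_{m+2}\leq\tau_{m+1}$. I would then distinguish the two cases dictated by the formula for $\tau_{m+1}$:
\begin{enumerate}
\item[(a)] On $\{\tilde\tau_{m+1}\wedge\tau_{m-1}<\tau_m\}$: $\tau_{m+1}=\tilde\tau_{m+1}\wedge\tau_{m-1}<\tau_m$, hence $\tilde\tau_{m+2}\leq\tau_{m+1}<\tau_m$ directly.
\item[(b)] On $A:=\{\tilde\tau_{m+1}\wedge\tau_{m-1}\geq\tau_m\}$: by construction $\tau_{m+1}=\tau_{m-1}$, and Proposition \ref{prop_properties_optimal_stopping_time}(ii) combined with the defining inequality forces $\tilde\tau_{m+1}=\tau_m$.
\end{enumerate}
On the event $A$, the data $(\fex,\xiex)$ defining $W^{m+2}$ coincide pathwise with the data defining $W^m$ (both drivers become $f_1\mathbb{I}_{\{\cdot\leq\tau_{m-1}\}}$ and both payoff processes use the same switching time $\tau_{m-1}$). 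I would argue that this pathwise coincidence propagates through the backward recursion defining the $g$-Snell envelope, yielding $W^{m+2}_k=W^m_k$ on $A$ for every $k$, and consequently $\tilde\tau_{m+2}=\tilde\tau_m$ on $A$. Combining this with the auxiliary inequality $\tilde\tau_m\leq\tau_m$ from the first step completes the induction.

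The main obstacle will be justifying rigorously that $W^{m+2}=W^m$ on $A$: the event $A=\{\tau_{m+1}=\tau_{m-1}\}$ is not $\cf_k$-measurable, so one cannot apply the zero-one law (Proposition \ref{zero-one law general}) to it directly, and in general two BSDEs whose drivers agree only on a non-adapted event can have different solutions on that event because of conditional-expectation ``leakage.'' My plan to surmount this would be to decompose $A$ according to the common value $j$ of $\tau_{m+1}=\tau_{m-1}$, writing $A=\bigsqcup_{j=0}^T A_j$ with $A_j\in\cf_j$, and carry out the backward induction on $W^{m+2}_k$ vs.\ $W^m_k$ restricted to each $A_j$: for $k\geq j$ the truncated drivers both vanish on $[k,k+1]$ so $\ce^{\fex}_{k,k+1}=\ce^{f_1^{\tau_{m-1}}}_{k,k+1}=\mathbb{E}[\,\cdot\,|\cf_k]$ there, while for $k<j$ one uses the $\cf_k$-measurable enlargement $\{\tau_{m+1}\wedge\tau_{m-1}\geq k+1\}$ (on which both truncated drivers coincide with $f_1$ on $[k,k+1]$) together with Proposition \ref{zero-one law general} to localize and transport the inductive equality at $k+1$ down to time $k$. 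This bookkeeping is the only non-cosmetic difficulty; the overall structure of the proof is the clean two-case alternative above.
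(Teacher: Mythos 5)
Your scaffolding (strong induction in place of the paper's minimal-counterexample contradiction, the auxiliary inequality $\ttil_m\leq\tau_m$, and case (a)) is sound and reproduces consequences the paper also derives. The gap is in case (b): the claimed identity $W^{m+2}_k=W^m_k$ on $A=\{\ttil_{m+1}\wedge\tau_{m-1}\geq\tau_m\}$ for \emph{every} $k$ is not only out of reach of your localization scheme, it is false in general. For $k$ below the (random) time at which $A$ becomes observable, the operator $\ce^{f_i^{m+2}}_{k,k+1}$ averages over atoms of $\cf_k$ that meet both $A$ and $A^c$; on $A^c$ the drivers $f_i\I_{\{t\leq\tau_{m+1}\}}$ and $f_i\I_{\{t\leq\tau_{m-1}\}}$ and the payoff processes genuinely differ, and that difference leaks into the value on $A$. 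Concretely, take $\cf_0$ trivial, $0<P(A)<1$ and $f_i\equiv c>0$: then $W^{m+2}_0$ and $W^m_0$ are constants determined by the \emph{global} data and are generically distinct, hence distinct on $A$ as well; this already ruins the conclusion $\ttil_{m+2}=\ttil_m$ on $A$ when $\ttil_m=0$. Your proposed repair --- localizing to the $\cf_k$-measurable enlargement $B_k=\{\tau_{m+1}\wedge\tau_{m-1}\geq k+1\}$ --- does not close the induction: Proposition \ref{zero-one law general} turns $\I_{B_k}\,\ce^{f_i^{m+2}}_{k,k+1}(W^{m+2}_{k+1})$ into $\ce^{f_i\I_{B_k}}_{k,k+1}(\I_{B_k}W^{m+2}_{k+1})$, but your inductive hypothesis equates $W^{m+2}_{k+1}$ and $W^{m}_{k+1}$ only on $A\subsetneq B_k$, and you cannot insert $\I_A$ inside the nonlinear operator because $A\notin\cf_k$.

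The paper avoids exactly this by arguing by contradiction on the smaller event $\Gamma=\{\tau_m<\ttil_{m+2}\}$ (contained in your $A$) rather than on all of $A$. The contradiction hypothesis $\tau_m<\ttil_{m+2}$ is what yields $\tau_m=\ttil_m$ on $\Gamma$ (this fails on $A\setminus\Gamma$ in general, since there $\tau_m$ may equal $\tau_{m-2}>\ttil_m$), and $\Gamma\in\cf_{\tau_m}$; one then compares $W^{m+2}$ and $W^{m}$ only at the single random time $\ttil_m=\tau_m$, where Proposition \ref{zero-one law general} applies directly with the set $\Gamma$ and no backward propagation of the equality below $\tau_m$ is needed. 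If you wish to keep a direct induction, you must replace your case (b) by this localized comparison at time $\tau_m$ on the sub-event $\{\tau_m<\ttil_{m+2}\}$ of $A$ and show that sub-event is null --- which is, in substance, the paper's argument.
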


\begin{proof}
We suppose, by way of contradiction,  that there exists $m\geq 1$ such that $P(\tau_m<\ttil_{m+2})>0,$ and we set 
$n:=\min\{m\geq 1: P(\tau_m<\ttil_{m+2})>0\}.$ We have $n\geq 3.$ \\
The definition of $n$ implies $\ttil_{n+1}\leq \tau_{n-1}$. This observation, combined with the definition of $\tau_{n+1}$ and with the inequality of part $(ii)$ of proposition \ref{prop_properties_optimal_stopping_time}, gives 
\begin{equation}\label{eq_1_prop_properties_optimal_stopping_time}
\tau_{n+1}=\ttil_{n+1}\I_{\{\ttil_{n+1}<\tau_{n}\}}+\tau_{n-1}\I_{\{\ttil_{n+1}=\tau_{n}\}}.
\end{equation} 
For similar reasons we have 
\begin{equation}\label{eq_11_prop_properties_optimal_stopping_time}
\tau_{n}=\ttil_{n}\I_{\{\ttil_{n}<\tau_{n-1}\}}+\tau_{n-2}\I_{\{\ttil_{n}=\tau_{n-1}\}}.
\end{equation} 
For the easing of the presentation, we set $\Gamma:=\{\tau_n<\ttil_{n+2}\}.$  On the set $\Gamma$, we have:
\begin{enumerate}
\item $\tau_n<\ttil_{n+2}\leq \tau_{n+1}$ (the last inequality being again due to property $(ii)$ of Prop.  \ref{prop_properties_optimal_stopping_time}). \label{prop_item_1}
\item $\tau_{n+1}=\tau_{n-1}$. This observation is due to \ref{prop_item_1}, combined with the equality \eqref{eq_1_prop_properties_optimal_stopping_time}. \label{prop_item_2}
\item $\xi^{n+2}=\xi^n$. This is a direct consequence of \ref{prop_item_2} and the definitions of $\xi^{n+2}$ and $\xi^n$.\label{prop_item_3}
\item $\tau_n=\ttil_n$. We  prove that $\{\ttil_n=\tau_{n-1}\}\cap \Gamma =\varnothing$ which, together with the expression \eqref{eq_11_prop_properties_optimal_stopping_time},  gives the desired statement. Due to \eqref{eq_11_prop_properties_optimal_stopping_time} we have $\{\ttil_n=\tau_{n-1}\}=\{\ttil_n=\tau_{n-1}\}\cap \{\tau_n=\tau_{n-2}\}.$ Thus, $\{\ttil_n=\tau_{n-1}\}\cap \Gamma = \{\ttil_n=\tau_{n-1}, \tau_n=\tau_{n-2}< \ttil_{n+2} \}.$ Now, we have $\ttil_n\leq \tau_{n-2}$ (due to the definition of $n$). Thus, $\{\ttil_n=\tau_{n-1}\}\cap \Gamma=\{\ttil_n=\tau_{n-1}\leq \tau_n=\tau_{n-2}< \ttil_{n+2} \}=\varnothing, $ the last equality being due to $\ttil_{n+2}\leq \tau_{n-1}.$      

\label{prop_item_4}
\end{enumerate}
We note that combining properties \ref{prop_item_1} and \ref{prop_item_4} gives $\ttil_n<\ttil_{n+2}$ on $\Gamma$. We will obtain a contradiction with this property. To this end, let us show that 
\begin{equation}\label{eq_intermediary}
\I_\Gamma W^{n+2}_{\ttil_n}=\I_\Gamma \xi^{n+2}_{\ttil_n}.
\end{equation}
By definition of $\ttil_n$, we have   $W^n_{\ttil_n}=\xi^n_{\ttil_n}.$ This observation combined with property \ref{prop_item_3} gives $W^n_{\ttil_n}=\xi^n_{\ttil_n}=\xi^{n+2}_{\ttil_n}$  on $\Gamma$. Thus, in order to show equality \eqref{eq_intermediary} it suffices to show 
\begin{equation}\label{eq_intermediary_bis}
\I_\Gamma W^{n+2}_{\ttil_n}=\I_\Gamma W^n_{\ttil_n}.
\end{equation}
In the following computations $f_i^{n+2}$ is equal to $f_1^{n+2}$ (resp. $f_2^{n+2}$) if $n+2$ is an odd (resp. even) number;  similarly, $f_i^{n}$ is equal to $f_1^{n}$ (resp. $f_2^{n}$) if $n$ is an odd (resp. even) number.
By using property \ref{prop_item_4} and Proposition \ref{zero-one law general} of the appendix, applied with $A:=\Gamma$ which is $\cf_{\tau_n}$-measurable, we obtain
$\I_\Gamma W^{n+2}_{\ttil_n}= \I_\Gamma W^{n+2}_{\tau_n} =\esssup_{\tau\in\ct_{\tau_n,T}}\I_\Gamma \ce^{f_i^{n+2}}_{\tau_n,\tau}(\xi_\tau^{n+2})=\esssup_{\tau\in\ct_{\tau_n,T}} \ce^{ f_i^{n+2}\I_\Gamma}_{\tau_n,\tau}(\I_\Gamma \xi_\tau^{n+2}).$ Now, by using the definitions of $f_i^{n+2}$ and $f_i^{n}$, as well as property \ref{prop_item_2}, we have:\\ 
$ f_i^{n+2}(\omega,t,y,z,k)\I_\Gamma(\omega)=f_i(\omega,t,y,z,k)\I_{\{t\leq \tau_{n+1}(\omega)\}}\I_\Gamma(\omega)=f_i(\omega,t,y,z,k)\I_{\{t\leq \tau_{n-1}(\omega)\}}\I_\Gamma(\omega)=f_i^n(\omega,t,y,z,k)\I_\Gamma(\omega). $ By using this observation and  property \ref{prop_item_3}, we obtain
$\ce^{f_i^{n+2}\I_\Gamma}_{\tau_n,\tau}(\I_\Gamma \xi_\tau^{n+2})=\ce^{f_i^{n}\I_\Gamma}_{\tau_n,\tau}(\I_\Gamma \xi_\tau^{n}).$ By using again Proposition  \ref{zero-one law general} from the appendix, we get
$ \ce^{f_i^{n}\I_\Gamma}_{\tau_n,\tau}(\I_\Gamma \xi_\tau^{n})=\I_\Gamma\ce^{f_i^{n}}_{\tau_n,\tau}(\xi_\tau^{n}).$
Combining the previous equalities leads to
$\I_\Gamma W^{n+2}_{\ttil_n}= \esssup_{\tau\in\ct_{\tau_n,T}}\I_\Gamma\ce^{f_i^{n}}_{\tau_n,\tau}(\xi_\tau^{n})=
\I_\Gamma \esssup_{\tau\in\ct_{\tau_n,T}}\ce^{f_i^{n}}_{\tau_n,\tau}(\xi_\tau^{n})=\I_\Gamma W^{n}_{\tau_n}=\I_\Gamma W^{n}_{\ttil_n},$ where we have again used property \ref{prop_item_4} to obtain the last equality. The proof of equality \eqref{eq_intermediary_bis}, and hence also of equality \eqref{eq_intermediary}, is thus completed.   From equality \eqref{eq_intermediary} and from the definition of $\ttil_{n+2}$ we deduce $\ttil_{n+2}\leq \ttil_n$ on $\Gamma$, which is in contradiction with $\ttil_n<\ttil_{n+2}$ on $\Gamma$. The proposition is thus proved.
\end{proof}

\begin{corollary}\label{corollary_stopping_times_expressions}
\begin{enumerate}
\item[(i)]For all $n\geq 2$, $\tau_{n+1}=\ttil_{n+1}\I_{\{\ttil_{n+1}<\tau_{n}\}}+\tau_{n-1}\I_{\{\ttil_{n+1}=\tau_{n}\}}.$
\item[(ii)]For all $n\geq 2$, $\ttil_{n+1}=\tau_{n+1}\wedge \tau_n$.
\end{enumerate}
\end{corollary}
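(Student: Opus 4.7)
The plan is to reduce both parts of the corollary to the definition of $\tau_{n+1}$ given in \eqref{eq_definitions} (or the even analogue in \eqref{eq_definitions_2}) by injecting two inequalities that eliminate the minima in that definition. The crucial inputs will be Proposition \ref{prop_link_ttil_and_tau} and part (ii) of Proposition \ref{prop_properties_optimal_stopping_time} (together with its even analogue).

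First, for (i), I would recall that $\tau_{n+1}$ is defined, by \eqref{eq_definitions} if $n+1$ is odd and by \eqref{eq_definitions_2} if $n+1$ is even, as
\[
\tau_{n+1}=(\ttil_{n+1}\wedge \tau_{n-1})\I_{\{\ttil_{n+1}\wedge \tau_{n-1}<\tau_{n}\}}+\tau_{n-1}\I_{\{\ttil_{n+1}\wedge \tau_{n-1}\geq\tau_{n}\}}.
\]
Applying Proposition \ref{prop_link_ttil_and_tau} with $m=n-1\geq 1$ yields $\ttil_{n+1}\leq \tau_{n-1}$ a.s., hence $\ttil_{n+1}\wedge \tau_{n-1}=\ttil_{n+1}$. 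This already simplifies the above display to
\[
\tau_{n+1}=\ttil_{n+1}\I_{\{\ttil_{n+1}<\tau_{n}\}}+\tau_{n-1}\I_{\{\ttil_{n+1}\geq\tau_{n}\}}.
\]
Next, by Proposition \ref{prop_properties_optimal_stopping_time}(ii) (or its direct even-index analogue noted after \eqref{eq_definitions_2}), $\ttil_{n+1}\leq \tau_{n}$ a.s. Consequently $\{\ttil_{n+1}\geq \tau_n\}=\{\ttil_{n+1}=\tau_n\}$ up to a null set, and (i) follows.

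For (ii), I would work on the two sets of the partition $\{\ttil_{n+1}<\tau_n\}$ and $\{\ttil_{n+1}=\tau_n\}$ separately. On $\{\ttil_{n+1}<\tau_n\}$, part (i) gives $\tau_{n+1}=\ttil_{n+1}$, so $\tau_{n+1}\wedge\tau_n=\ttil_{n+1}\wedge\tau_n=\ttil_{n+1}$. On $\{\ttil_{n+1}=\tau_n\}$, part (i) gives $\tau_{n+1}=\tau_{n-1}$, while the inequality $\ttil_{n+1}\leq \tau_{n-1}$ established above shows that on this set $\tau_n=\ttil_{n+1}\leq \tau_{n-1}=\tau_{n+1}$, so $\tau_{n+1}\wedge\tau_n=\tau_n=\ttil_{n+1}$. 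In both cases the identity $\ttil_{n+1}=\tau_{n+1}\wedge\tau_n$ holds, proving (ii).

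I do not expect a real obstacle here: the proof is essentially bookkeeping with the definition of $\tau_{n+1}$, and both nontrivial ingredients ($\ttil_{n+1}\leq \tau_{n-1}$ and $\ttil_{n+1}\leq \tau_n$) are already available. The only point requiring a small amount of care is to make sure the analogue of Proposition \ref{prop_properties_optimal_stopping_time}(ii) for even indices is invoked when $n+1$ is even, but this was explicitly acknowledged just after \eqref{eq_definitions_2}.
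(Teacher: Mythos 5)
Your proof is correct and follows essentially the same route as the paper: both reduce part (i) to the definition of $\tau_{n+1}$ by invoking $\ttil_{n+1}\leq\tau_{n-1}$ (Proposition \ref{prop_link_ttil_and_tau}) and $\ttil_{n+1}\leq\tau_n$ (Proposition \ref{prop_properties_optimal_stopping_time}(ii) and its even-index analogue), and then derive (ii) from (i) by the same case analysis on $\{\ttil_{n+1}<\tau_n\}$ and $\{\ttil_{n+1}=\tau_n\}$, using $\ttil_{n+1}\leq\tau_{n-1}$ again on the second set.
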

\begin{proof}
The proof of assertion $(i)$ is a direct consequence of the definition of $\tau_{n+1}$, combined with the previous proposition \ref{prop_link_ttil_and_tau} and with property $(ii)$ of proposition  \ref{prop_properties_optimal_stopping_time}.\\
Let us prove $(ii)$. By using the previous assertion $(i)$, we obtain
\begin{equation*}
\begin{aligned}
\tau_{n+1}\wedge \tau_n&=
(\ttil_{n+1}\wedge \tau_n)\I_{\{\ttil_{n+1}<\tau_{n}\}}+(\tau_{n-1}\wedge \tau_n)\I_{\{\ttil_{n+1}=\tau_{n}\}}\\
&=
\ttil_{n+1}\I_{\{\ttil_{n+1}<\tau_{n}\}}+(\tau_{n-1}\wedge \ttil_{n+1})\I_{\{\ttil_{n+1}=\tau_{n}\}}.
\end{aligned}
\end{equation*}
By using the previous proposition \ref{prop_link_ttil_and_tau} and property $(ii)$ of proposition \ref{prop_properties_optimal_stopping_time}, we conclude that
 $\tau_{n+1}\wedge \tau_n=\ttil_{n+1}\I_{\{\ttil_{n+1}<\tau_{n}\}}+ \ttil_{n+1}\I_{\{\ttil_{n+1}=\tau_{n}\}}=\ttil_{n+1}.$ 
\end{proof}

\begin{lemma}\label{lemma_equlity}
On $\{\tau_n=\tau_{n-1}\}$ we have $\tau_m=T,\;\forall m\in\{1,\ldots,n\}.$
\end{lemma}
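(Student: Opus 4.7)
The plan is to prove the lemma by backward-descending induction on $n$, driven by the identity in Corollary \ref{corollary_stopping_times_expressions}(i). The base cases $n=1$ and $n=2$ are immediate since $\tau_1$ and $\tau_2$ are set equal to the deterministic constant $T$ by construction, so the event $\{\tau_n = \tau_{n-1}\}$ is the whole space and the conclusion trivially holds.

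For the inductive step, fix $n \geq 3$ and apply Corollary \ref{corollary_stopping_times_expressions}(i) to $\tau_n$ (that is, use the formula with the index $n+1$ replaced by $n$, which is legitimate because $n-1 \geq 2$). This gives
\begin{equation*}
\tau_n = \ttil_n\,\I_{\{\ttil_n < \tau_{n-1}\}} + \tau_{n-2}\,\I_{\{\ttil_n = \tau_{n-1}\}}.
\end{equation*}
Combine this with the bound $\ttil_n \leq \tau_{n-1}$, which is part of Proposition \ref{prop_properties_optimal_stopping_time}(ii) when $n$ is odd and the analogous statement (announced right after \eqref{eq_definitions_2}) when $n$ is even. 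On the event $\{\tau_n = \tau_{n-1}\}$, if we had $\ttil_n < \tau_{n-1}$ with positive probability, then on that sub-event $\tau_n = \ttil_n < \tau_{n-1}$, contradicting the hypothesis. Consequently $\ttil_n = \tau_{n-1}$ a.s.\ on $\{\tau_n = \tau_{n-1}\}$, and the display above collapses to $\tau_n = \tau_{n-2}$ on this event.

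We have therefore shown the chain $\tau_n = \tau_{n-1} = \tau_{n-2}$ on $\{\tau_n = \tau_{n-1}\}$, which in particular implies the inclusion $\{\tau_n = \tau_{n-1}\} \subset \{\tau_{n-1} = \tau_{n-2}\}$. Applying the induction hypothesis at step $n-1$ on this smaller event yields $\tau_m = T$ for all $m \in \{1,\ldots,n-1\}$, and then $\tau_n = \tau_{n-1} = T$ closes the induction. The only delicate point is verifying that both the Corollary and the bound $\ttil_n \leq \tau_{n-1}$ are available uniformly in the parity of $n$, which is precisely the content of the "analogous properties" remark for the even-index objects defined through \eqref{eq_definitions_2}; beyond that, the argument is a straightforward algebraic manipulation of the recursion.
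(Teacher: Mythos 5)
Your proof is correct and follows essentially the same route as the paper: induction on $n$, with Corollary \ref{corollary_stopping_times_expressions}(i) forcing $\ttil_n=\tau_{n-1}$ and hence $\tau_n=\tau_{n-2}$ on $\{\tau_n=\tau_{n-1}\}$, then the induction hypothesis; you merely spell out the case elimination that the paper leaves implicit. The only cosmetic remark is that the base case $n=1$ is vacuous (no $\tau_0$ is defined), so the induction really starts at $n=2$, exactly as in the paper.
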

\begin{proof}
The proof is similar to that of lemma 3.1 in \cite{Hamadene} and is given for reader's convenience. We proceed by induction. The result is trivially true for $n=2$. Assume that the result holds for $n-1$, where $n\geq 3$. From the expression of $\tau_n$ from the previous Corollary \ref{corollary_stopping_times_expressions}, part (i), we see that $\tau_n=\tau_{n-2}$ on $\{\tau_n=\tau_{n-1}\}$, and we conclude by using the induction hypothesis.
\end{proof}

\begin{proposition} \label{prop_optimal_solutions_approximated_problem}
The following inequalities hold true:
\begin{equation*}
\begin{aligned}
J_1(\tau,\tdn)&\leq J_1(\tau_{2n+1},\tdn), \text{ for all }\tau\in \stopo.\\
J_2(\tau_{2n+1},\tau)&\leq J_2(\tau_{2n+1}, \tau_{2n+2}), \text{ for all }\tau\in\stopo.
\end{aligned}
\end{equation*}
\end{proposition}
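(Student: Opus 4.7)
The plan is to prove the first inequality in detail; the second follows by an almost identical (and slightly simpler) argument with the roles of the two agents interchanged. The strategy is a three–step program that combines the preliminary Proposition \ref{equality_value_functions}, the optimal-stopping Theorem \ref{prop_martingale_optimality_principle_general}, and the algebraic identities on $\tau_{2n+1}$ and $\ttil_{2n+1}$ collected in Proposition \ref{prop_properties_optimal_stopping_time}, Proposition \ref{prop_link_ttil_and_tau}, and Corollary \ref{corollary_stopping_times_expressions}.

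\textbf{Step 1: identify the sup as a Snell-envelope value.} Set $\bar\xi_k:=X^1_k\I_{\{k\leq\tdn\}}+Y^1_{\tdn}\I_{\{\tdn<k\}}$, so that $J_1(\tau,\tdn)=\ce^{f_1}_{0,\tau\wedge\tdn}(\bar\xi_\tau)$. The two processes $\bar\xi$ and $\xiex$ are precisely the ''large-inequality'' and ''strict-inequality'' payoff processes of Proposition \ref{equality_value_functions} applied with $\mu=\tdn$, $g=f_1$, $X=X^1$, $Y=Y^1$. Applying that proposition at $k=0$ yields
$$\sup_{\tau\in\stopo} J_1(\tau,\tdn)\;=\;\sup_{\tau\in\stopo}\ce^{f_1}_{0,\tau\wedge\tdn}(\xiex_\tau)\;=\;W_0^{2n+1}.$$
By Theorem \ref{prop_martingale_optimality_principle_general}, $W_0^{2n+1}=\ce^{\fex}_{0,\ttil_{2n+1}}(\xiex_{\ttil_{2n+1}})$. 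Since $\ttil_{2n+1}\leq\tdn$ by Proposition \ref{prop_properties_optimal_stopping_time}(ii), the driver $\fex=f_1\I_{\{\cdot\leq\tdn\}}$ agrees with $f_1$ on $[0,\ttil_{2n+1}]$, and hence $W_0^{2n+1}=\ce^{f_1}_{0,\ttil_{2n+1}}(\xiex_{\ttil_{2n+1}})$.

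\textbf{Step 2: rewrite $J_1(\tau_{2n+1},\tdn)$ with the same terminal time.} By Corollary \ref{corollary_stopping_times_expressions}(ii), $\ttil_{2n+1}=\tau_{2n+1}\wedge\tdn$, so
$$J_1(\tau_{2n+1},\tdn)=\ce^{f_1}_{0,\ttil_{2n+1}}\!\bigl(\bar\xi_{\tau_{2n+1}}\bigr).$$
It therefore suffices to verify the \emph{pointwise} identity $\bar\xi_{\tau_{2n+1}}=\xiex_{\ttil_{2n+1}}$ a.s. This identity is established by case analysis on $\Omega=\{\ttil_{2n+1}<\tdn\}\cup\{\ttil_{2n+1}=\tdn\}$, using Proposition \ref{prop_link_ttil_and_tau} (hence $\ttil_{2n+1}\wedge\tau_{2n-1}=\ttil_{2n+1}$) to simplify the defining formula of $\tau_{2n+1}$. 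On $\{\ttil_{2n+1}<\tdn\}$, $\tau_{2n+1}=\ttil_{2n+1}<\tdn$ and both random variables reduce to $X^1_{\ttil_{2n+1}}$. On $\{\ttil_{2n+1}=\tdn\}$, $\tau_{2n+1}=\tau_{2n-1}\geq\tdn$, so both reduce to $Y^1_{\tdn}$; the degenerate subcase $\tau_{2n-1}=\tdn$ is handled by Lemma \ref{lemma_equlity}, which forces $\tau_m=T$ for $m\leq 2n$ and hence trivializes the identity through $X^1_T=Y^1_T$.

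\textbf{Step 3: conclude and transfer to $J_2$.} Combining Steps 1 and 2 yields $J_1(\tau_{2n+1},\tdn)=W_0^{2n+1}=\sup_\tau J_1(\tau,\tdn)$, which is the first inequality. For the second inequality one repeats the scheme with $\xi^{2n+2}$, $\ttil_{2n+2}$, $\tau_{2n+2}$, and the driver $f_2^{2n+2}$; the key observation is that here the seller's payoff $X^2_\td\I_{\{\td<\tau_{2n+1}\}}+Y^2_{\tau_{2n+1}}\I_{\{\tau_{2n+1}\leq\td\}}$, viewed as a function of $\td$, already equals $\xi^{2n+2}_\td$, so $\sup_\td J_2(\tau_{2n+1},\td)=W_0^{2n+2}$ without any appeal to Proposition \ref{equality_value_functions}. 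The rest is the analogue case analysis on $\{\ttil_{2n+2}<\tau_{2n+1}\}$ vs.\ $\{\ttil_{2n+2}=\tau_{2n+1}\}$, using the analogues of Proposition \ref{prop_properties_optimal_stopping_time}(ii), Proposition \ref{prop_link_ttil_and_tau} and Corollary \ref{corollary_stopping_times_expressions}(ii).

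The main technical obstacle is precisely Step 2: because the payoff $\bar\xi$ uses the large inequality $\{\tau_1\leq\tau_2\}$ while $\xiex$ uses the strict one, the two can differ on the ''diagonal'' event $\{\ttil_{2n+1}=\tdn\}$, and one must track which branch of the defining formula for $\tau_{2n+1}$ is active on that set in order to see that, on this set, $\tau_{2n+1}$ strictly exceeds $\tdn$ (unless the trivializing scenario of Lemma \ref{lemma_equlity} occurs). This is where all of the auxiliary inequalities on $\ttil$'s and $\tau$'s are used in concert.
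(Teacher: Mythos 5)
Your proof is correct and takes essentially the same route as the paper's: equate $\sup_{\tau}J_1(\tau,\tau_{2n})$ with $W_0^{2n+1}$ via Proposition \ref{equality_value_functions}, then show $J_1(\tau_{2n+1},\tau_{2n})=W_0^{2n+1}$ using $\tilde\tau_{2n+1}=\tau_{2n+1}\wedge\tau_{2n}$, the optimality of $\tilde\tau_{2n+1}$, and Lemma \ref{lemma_equlity} together with Assumption (A2) on the diagonal event --- your explicit case analysis is just an unpacked version of the paper's one-line payoff identity $\bar\xi^{2n+1}_{\tau_{2n+1}}=\xi^{2n+1}_{\tau_{2n+1}}$. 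Your extra remark that the second inequality needs no appeal to Proposition \ref{equality_value_functions} (the seller's payoff already has the strict-inequality form $\xi^{2n+2}$) is accurate, though the paper simply dismisses that half as ``similar arguments''.
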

In other words, the strategy $\tau_{2n+1}$ is optimal for the first agent at time $0$ when the second agent's strategy is fixed at $\tau_{2n}$. The strategy $\tau_{2n+2}$ is optimal for the second agent at time $0$ when the first agent's strategy is fixed at $\tau_{2n+1}$.
\begin{proof}
We will prove the first inequality. The second one can be proved by means of similar arguments.
Due to equation \eqref{equation_equality_values}, we have
\begin{equation}\label{eq_1_prop_J1_J2}
J_1(\tau,\tdn)\leq W_0^{2n+1}.
\end{equation} 
On the other hand, 
\begin{equation*}
\begin {aligned}
J_1(\tau_{2n+1},\tdn)=\ce^{f_1^{2n+1}}_{0,\tau_{2n+1}\wedge\tdn}\big(X^1_{\tau_{2n+1}}\I_{\{\tau_{2n+1}\leq \tdn\}} +Y^1_{\tdn}\I_{\{\tdn< \tau_{2n+1}\}}\big)
=\ce^{f_1^{2n+1}}_{0,\tau_{2n+1}\wedge\tdn}(\xiex_{\tau_{2n+1}}),
\end{aligned}
\end{equation*}
where Lemma \ref{lemma_equlity} and Assumption \secondhyp (that is, the assumption $X_T^1=Y_T^1$) have been used to  obtain the last  equality.
We use Remark \ref{The first remark}, Corollary \ref{corollary_stopping_times_expressions} (part $(ii)$), and the optimality of $\ttil_{2n+1}$ to obtain 
$$\ce^{f_1^{2n+1}}_{0,\tau_{2n+1}\wedge\tdn}(\xiex_{\tau_{2n+1}})=\ce^{f_1^{2n+1}}_{0,\tau_{2n+1}\wedge\tdn}(\xiex_{\tau_{2n+1}\wedge\tdn})=\ce^{f_1^{2n+1}}_{0,\ttil_{2n+1}}(\xiex_{\ttil_{2n+1}})=W_0^{2n+1}.$$
Thus, $J_1(\tau_{2n+1},\tdn)=W_0^{2n+1},$ which, combined with \eqref{eq_1_prop_J1_J2}, gives the desired result. 
\end{proof}

\begin{Remark} As a by-product of the above proof we obtain: 
$W_0^{2n+1}=\ce^{f_1^{2n+1}}_{0,\tau_{2n+1}\wedge\tdn}(\xiex_{\tau_{2n+1}}).$ We conlude that  the stopping time $\tau_{2n+1}$ is also optimal (at time $t=0$) for the  optimization problem \eqref{eq_definitions}.  
\end{Remark}
{\bf We define} $\tau_1^*$ {\bf and} $\tau_2^*$ {\bf by} $\tau_1^*:= \lim_{n\rightarrow\infty} \tau_{2n+1}$ {\bf and} $\tau_2^*:= \lim_{n\rightarrow\infty} \tau_{2n}.$\\
We note that $\tau_1^*$ and $\tau_2^*$ are stopping times in $\stopo$. \\ 
We now prove that we can pass to the limit in the inequalities of the previous proposition. 
\begin{proposition}\label{prop_convergence_I}
\begin{enumerate}
\item[(i)] For all $\tau\in\stopo$,  $\limn J_1(\tau,\tdn)=J_1(\tau,\tau_2^*)$ and $\limn J_2(\tau_{2n+1},\tau)=J_2(\tau_1^*, \tau).$
\item[(ii)] $\limn J_1(\tau_{2n+1},\tdn)=J_1(\tau_1^*,\tau_2^*)$ and $\limn J_2(\tau_{2n+1},\tau_{2n+2})=J_2(\tau_1^*,\tau_2^*)$.
\end{enumerate}
\end{proposition}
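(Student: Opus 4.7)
The plan is to exploit the fact that in discrete time with finite horizon, a non-increasing sequence of stopping times with values in $\{0,1,\ldots,T\}$ must stabilize $P$-almost surely after finitely many (random) steps, and then to invoke the classical $L^2$-stability estimate for BSDEs with standard Lipschitz drivers in order to pass to the limit in each $\ce^{f_i}_{0,\cdot}(\cdot)$.

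First I would record that the sequences $(\tdn)_n$ and $(\tau_{2n+1})_n$ are non-increasing (this follows from Corollary \ref{corollary_stopping_times_expressions}(i) combined with Proposition \ref{prop_link_ttil_and_tau}) and take values in $\{0,1,\ldots,T\}$. Hence for $P$-a.e.\ $\omega$ there exists $N(\omega)<\infty$ such that $\tdn(\omega)=\tau_2^*(\omega)$ and $\tau_{2n+1}(\omega)=\tau_1^*(\omega)$ for all $n\geq N(\omega)$. In particular any quantity built from these stopping times is eventually constant on each $\omega$, which is far stronger than mere pointwise convergence and will make a routine dominated-convergence argument possible.

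For the first equality of (i), I would write $J_1(\tau,\tdn)=\ce^{f_1^{\tau\wedge\tdn}}_{0,T}(\xi_n)$ with $\xi_n:=X^1_\tau\I_{\{\tau\leq\tdn\}}+Y^1_{\tdn}\I_{\{\tdn<\tau\}}$, and likewise $J_1(\tau,\tau_2^*)=\ce^{f_1^{\tau\wedge\tau_2^*}}_{0,T}(\xi^*)$. Denoting by $(Y^n,Z^n,k^n)$ and $(Y^*,Z^*,k^*)$ the respective BSDE solutions on $[0,T]$, the standard $L^2$-stability estimate for BSDEs with Lipschitz drivers, compared at the target solution $(Y^*,Z^*,k^*)$, yields
\begin{equation*}
|Y_0^n-Y_0^*|^2 \leq C\left(\E|\xi_n-\xi^*|^2 + \E\int_0^T |f_1(s,Y_s^*,Z_s^*,k_s^*)|^2\,\bigl|\I_{\{s\leq\tau\wedge\tdn\}}-\I_{\{s\leq\tau\wedge\tau_2^*\}}\bigr|\,ds\right),
\end{equation*}
for a constant $C$ depending only on $T$ and the Lipschitz constant of $f_1$. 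Both terms on the right-hand side tend to zero by dominated convergence: $\xi_n\to\xi^*$ a.s.\ by the stabilization, and $|\xi_n|\leq\max_k|X^1_k|+\max_k|Y^1_k|\in L^2$ by Assumption (A3); the integrand of the second term vanishes pointwise as $\tdn\downarrow\tau_2^*$ and is dominated by $|f_1(s,Y_s^*,Z_s^*,k_s^*)|^2$, which lies in $L^1(dP\otimes ds)$ because $(Y^*,Z^*,k^*)\in{\cal S}^{2,T}\times H^{2,T}\times H_\nu^{2,T}$ and $f_1$ is a standard Lipschitz driver.

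The second equality of (i) is obtained by the symmetric argument with $f_2$ in place of $f_1$ and the seller's payoff in place of the buyer's. Part (ii) is handled identically: both $(\tau_{2n+1})$ and $(\tdn)$ stabilize simultaneously, so the associated terminal data and truncated drivers converge in the required sense, and the same stability estimate closes the argument. The only potentially delicate point is selecting the appropriate form of the a priori BSDE estimate and justifying the domination in the driver-comparison term; both are routine consequences of the Lipschitz assumption via It\^o's formula and Gronwall's inequality, so I do not anticipate a real obstacle.
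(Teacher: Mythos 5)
Your proposal is correct and takes essentially the same route as the paper: both arguments rest on the observation that the discrete-valued stopping times are eventually stationary a.s., an $L^2$-domination via Assumption (A3), and a stability/continuity property of BSDEs to pass to the limit in the $f_i$-conditional expectations. The only difference is presentational: the paper cites a ready-made continuity result for BSDE solutions with respect to terminal time and terminal condition (Prop.\ A.6 of Quenez--Sulem 2013), whereas you make the same step explicit through the standard a priori $L^2$-estimate applied to the BSDEs on $[0,T]$ with the cut-off drivers $f_1^{\tau\wedge\tau_{2n}}$, which is the estimate underlying that cited result.
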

\begin{proof} 
Let us prove the first assertion of part $(i)$. The other assertions can be proved by similar arguments; the details are left to the reader.  
For the easing of the presentation, we set $\bar{\xi}^{2n+1}_t:=X^1_{t}\I_{\{t\leq \tdn\}} +Y^1_{\tdn}\I_{\{\tdn< t\}}$
 (the process  $\bar{\xi}^{2n+1}$ corresponds to the reward process of the first agent when the second agent's strategy is $\tau_{2n}$). 
 We also set  $\bar{\xi}^{\tau_2^*}_t:=X^1_{t}\I_{\{t\leq \tau_2^*\}} +Y^1_{\tau_2^*}\I_{\{\tau_2^*< t\}}.$ 
With this notation, we have $J_1(\tau,\tdn)=\ce^{f_1}_{0,\tau\wedge\tdn}(\bar{\xi}^{2n+1}_{\tau})$ and $J_1(\tau,\tau_2^*)=\ce^{f_1}_{0,\tau\wedge\tau_2^*}(\bar{\xi}^{\tau_2^*}_{\tau})$. 
We note that the sequence $(\tau\wedge \tau_{2n})$ converges from above to $\tau\wedge \tau_2^*$.
Suppose that we have shown 
\begin{equation}\label{dd}
\bar{\xi}^{2n+1}_\tau \overset{}{\underset{n\rightarrow\infty}{\longrightarrow}}\bar{\xi}^{\tau_2^*}_\tau
 \quad {\rm a.s.\,\,\,\, and }\quad \E(\sup_n(\bar{\xi}^{2n+1}_\tau)^2)<+\infty.
 \end{equation}
Then, by the continuity property of the solutions of BSDEs with respect to both terminal time and terminal condition (see proposition A.6 in \cite{Quenez-Sulem-0}), we get $\limn J_1(\tau,\tdn)$ $=$ $J_1(\tau,\tau_2^*)$, which is the desired result.\\
It remains to check \eqref{dd}.
Now, the sequence $(\tau\wedge \tau_{2n})$ converges a.s. and $\tau\wedge \tau_{2n}$
 is valued in the finite set $\ld$. It 
follows that for almost every $\omega$, the sequence of reals 
$(\tau(\omega)\wedge \tau_{2n}(\omega))$ is stationary, which implies 
 that the sequence $(\bar{\xi}^{2n+1}_\tau (\omega))$ is also stationary and converges to $\bar{\xi}^{\tau_2^*}_\tau(\omega)$. 
 Finally, we check that $\E(\sup_n(\bar{\xi}^{2n+1}_\tau)^2)<+\infty$, which is due to the inequality $ |\bar{\xi}^{2n+1}_\tau|^2\leq 2|X^1_\tau|^2+ 2|Y^1_{\tdn}|^2$, to the  assumption \thirdhyp, and to the square integrability of $X^1$.  
 The proof is thus complete.
 \end{proof}
\noindent
\textbf{Conclusion:} We deduce from the previous two propositions (Prop. \ref{prop_optimal_solutions_approximated_problem} and Prop. \ref{prop_convergence_I}) that  $J_1(\tau,\tau_2^*)\leq J_1(\tau_1^*,\tau_2^*),$ for all $\tau\in \stopo$, and 
$J_2(\tau_1^*,\tau)\leq J_2(\tau_1^*, \tau_2^*), $  for all $\tau\in\stopo$; in other words, $(\tau_1^*, \tau_2^*)$ is a NEP of our Dynkin game. 

\begin{Remark}
 We note that the proof of Proposition \ref{prop_convergence_I} relies on the fact that the stopping times in the  framework of our paper are valued in a finite set. Proposition \ref{prop_convergence_I} (more specifically, statement $(ii)$) seems difficult to establish in a continuous time framework. More precisely, due to the fact that a convergent sequence of reals in
$[0,T]$ is not necessarily stationary, it is not so clear that it is
possible to derive statement $(ii)$ of Proposition \ref{prop_convergence_I} from Proposition \ref{prop_optimal_solutions_approximated_problem}, contrary to the
discrete case.

 \end{Remark}

\section{Further developments}\label{sect_develop}
The results given in the present paper can be generalized to the case of strategies valued in a finite set of stopping times. More specifically, let us consider the following setting: 
Let $T$ be a positive real number. Let $K\in\N.$ Let $\theta_0$, $\theta_1$, ..., $ \theta_K$ be $K+1$ (distinct) $\F$-stopping times with values in $[0,T]$ such that 
$0=\theta_0\leq \theta_1  \leq \ldots\leq \theta_K=T$ a.s. We consider a stopper who, in each scenario $\omega\in\Omega$, can act only at times 
$\theta_0(\omega)$, $\theta_1(\omega)$, ...,$ \theta_K(\omega).$ In other words, the stopper can choose his/her strategy among the stopping times $\tau$ of the form $\tau=\sum_{i=0}^{K} \theta_i {\bf 1}_{A_i}$, where $(A_i)_{i\in\{0,\ldots,K\}}$ is a partition of $\Omega$ such that $A_i\in\cf_{\theta_i}$, for all $i\in\{0,\ldots,K\}$.
We denote by $\Theta$ this set of stopping times. We are also given an 
$\F$- adapted square-integrable payoff process $(\xi_t)_{t\in[0,T]}$. 
In this framework the optimal stopping problem of Section \ref{sect_optimal} becomes:
$ V_0:=\sup_{\tau\in\Theta} \ce^{g}_{0,\tau}(\xi_\tau).$ 
A game problem analogous to that of Section \ref{sect_Dynkin}, where the set of stopping times $\stopo$ is replaced by the set $\Theta$, can also be formulated. In the particular case where the stopping times  $\theta_0$, $\theta_1$, ..., $ \theta_K$ are strictly ordered (that is, $0=\theta_0<\theta_1< ...<\theta_K=T$ a.s.), the two problems can be addressed by using techniques similar to those used in the present paper, combined with a change of variables. For the general case (cf. our ongoing work \cite{nouv}), we need some additional arguments related to the work of \cite{Kob},\cite{Rouy}.      

 \appendices
\section[Appendix]{Appendix}\label{AppC}

\begin{Remark}
Let $(\zeta,g)$ be standard parameters and  let $Y$ be the solution of the BSDE with parameters $(\zeta,g)$.
Let $ \tau\in\ct_{0,T}$ be a stopping time. Let $\bar Y$ be the solution associated with driver 
$g {\bf 1}_{]\tau, T]}$ and terminal condition 
$\zeta$. We have $\bar Y_t=Y_t{\bf 1}_{[\tau, T]}$ a.s. Thus, the process $\bar Y$ can be seen as the restriction of 
$Y$ on $[\tau, T]$. 
\end{Remark}

\begin{Remark}\label{Rmk_measurability}
Let $\tau\in\ct_{0,T}$ be a stopping time . 
By  $]\tau,T ]$ we denote  the set $\{(\omega,t)\in\Omega\times [0,T]: \tau(\omega)< t\leq T\}.$
Let us recall the following: for  $A\in\cf_\tau$, the process  $ \I_A\I_{]\tau,T ]}$ 
is adapted left-continuous and thus  predictable. 
Thus, if   $g$ is a standard Lipschitz driver, then  $g\I_A\I_{]\tau,T ]}$ is also a standard Lipschitz driver.
For notational simplicity, the driver $g\I_A\I_{]\tau,T ]}$  will be denoted by   $g\I_A$. This makes sense 
if we consider the BSDE restricted to $[\tau,T]$, which will be the case in the sequel.
\end{Remark}

The following easy proposition is used in the proof of some of the results of the main part.

\begin{proposition}\label{zero-one law general}    
Let $(g, \zeta)$ be standard parameters.  Let $\tau\in\ct_{0,T}$ be a stopping time and let $A\in\cf_\tau.$  
We have $\I_A \ce^{g}_{\tau,T}(\zeta)=\ce^{g\I_A}_{\tau,T}(\I_A\zeta),$ where 
we have used the notational convention of Remark  \ref{Rmk_measurability}.  
\end{proposition}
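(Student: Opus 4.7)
The plan is to identify the first component of the BSDE solution associated with the modified parameters $(g\I_A, \I_A\zeta)$ (in the sense of Remark \ref{Rmk_measurability}) with the process $\I_A Y$, where $(Y, Z, k)$ denotes the solution triplet of the BSDE with parameters $(g,\zeta)$. Evaluating at time $\tau$ and appealing to the uniqueness of the BSDE solution then yields the claim.

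First, I would write the BSDE satisfied by $Y$ restricted to $t\in[\tau,T]$ and multiply both sides by $\I_A$. Since $A\in\cf_\tau\subseteq\cf_s$ for every $s\in[\tau,T]$, the process $(\I_A\I_{]\tau,T]}(s))_{s\in[0,T]}$ is predictable (as recalled in Remark \ref{Rmk_measurability}) and bounded, hence $\I_A$ commutes with the Lebesgue integral in $ds$, the stochastic integral against $dW$, and the integral against the compensated jump measure $\tilde N(ds,de)$ on $[\tau,T]$. This gives, for every $t\in[\tau,T]$,
\begin{equation*}
\I_A Y_t = \I_A\zeta + \int_t^T \I_A\, g(s,Y_s,Z_s,k_s)\, ds - \int_t^T \I_A Z_s\, dW_s - \int_t^T\!\!\int_E \I_A k_s(e)\,\tilde N(ds,de).
\end{equation*}

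Second, I would use the pointwise identity $\I_A\, g(s,Y_s,Z_s,k_s) = \I_A\, g(s,\I_A Y_s,\I_A Z_s,\I_A k_s)$, which follows by case analysis: on $A$ the factor $\I_A$ equals $1$ and both sides reduce to $g(s,Y_s,Z_s,k_s)$, while on $A^c$ both sides vanish. Consequently, the triplet $(\I_A Y_t,\I_A Z_t,\I_A k_t)_{t\in[\tau,T]}$ is a solution on $[\tau,T]$ of the BSDE with driver $g\I_A$ and terminal condition $\I_A\zeta$. Since the driver $g\I_A\I_{]\tau,T]}$ vanishes on $[0,\tau]$, the value at $\tau$ of any solution of the BSDE on $[0,T]$ with parameters $(g\I_A\I_{]\tau,T]},\I_A\zeta)$ coincides with the value at $\tau$ of its restriction to $[\tau,T]$. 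By uniqueness of BSDE solutions for standard parameters, this value equals $\I_A Y_\tau$, which is by definition $\ce^{g\I_A}_{\tau,T}(\I_A\zeta)$, yielding $\I_A\ce^g_{\tau,T}(\zeta)=\I_A Y_\tau = \ce^{g\I_A}_{\tau,T}(\I_A\zeta)$.

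The only delicate point is the commutation of $\I_A$ with the three integrals on $[\tau,T]$, which requires the $\cf_\tau$-measurability of $\I_A$ and the predictability observation of Remark \ref{Rmk_measurability}; the remaining steps (the algebraic identity for the driver and the uniqueness argument) are routine.
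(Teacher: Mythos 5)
Your proof is correct and follows essentially the same route as the paper's: multiply the BSDE satisfied by $(Y,Z,k)$ on $[\tau,T]$ by $\I_A$, use the identity $\I_A\,g(s,Y_s,Z_s,k_s)=\I_A\,g(s,\I_A Y_s,\I_A Z_s,\I_A k_s)$, and invoke uniqueness of the solution with parameters $(g\I_A\I_{]\tau,T]},\I_A\zeta)$ to identify $\I_A Y_\tau$ with $\ce^{g\I_A}_{\tau,T}(\I_A\zeta)$. Your explicit justification of passing $\I_A$ inside the three integrals (via predictability of $\I_A\I_{]\tau,T]}$ for $A\in\cf_\tau$) is the same point the paper uses implicitly.
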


\begin{Remark}
Proposition \ref{zero-one law general} is to be compared with the "zero-one law" for $g$-expectations. 
We note that the assumption $g(s,0,0,0)=0$, required in the "zero-one law" for $g$-expectations, is not required in the above proposition.
\end{Remark}
\begin{proof}  The proof, which is  similar to that of the "zero-one law" for $g$-expectations (cf., for instance, \cite[page 30]{Peng}), is given for the convenience of the reader.
Let $(Y,Z,k)$ be the unique solution of the BSDE with standard parameters $(g, \zeta)$. Thus, $(Y,Z,k)$ satisfies the equation 
\begin{equation}\label{app_prop_zero_one_eq_1}
Y_{u\vee \tau}=\zeta+\int_{u\vee\tau}^T g(s,Y_s, Z_s, k_s)ds -\int_{u\vee\tau}^T Z_sdW_s -\int_{u \vee \tau}^T\int_E  k_s(e) \tilde N(ds,de), \text{ for all } u\in[0,T].
\end{equation}
We note that $\I_A g(s,Y_s, Z_s,k_s)=\I_A g(s,\I_AY_s, \I_AZ_s, \I_Ak_s).$ By multiplying both sides of the equation \eqref{app_prop_zero_one_eq_1} by $\I_A$ and by using the previous observation, we obtain: for all  $u\in[0,T]$,
\begin{equation*}
\begin{aligned}
 \I_AY_{u\vee\tau}&=\I_A\zeta+\int_{u\vee\tau}^T \I_Ag(s,\I_A Y_s,\I_A Z_s, \I_Ak_s)ds -\int_{u\vee\tau}^T \I_AZ_sdW_s\\
 &\quad-\int_{u \vee \tau}^T\int_E  \I_Ak_s(e) \tilde N(ds,de)\\
 &=\I_A\zeta+\int_{u\vee\tau}^T \I_Ag(s,\I_A Y_s,\I_A Z_s, \I_Ak_s)\I_{]\tau,T ]}(s)ds -\int_{u\vee\tau}^T \I_AZ_sdW_s \\
 &\quad-\int_{u \vee \tau}^T\int_E  \I_Ak_s(e) \tilde N(ds,de).
 \end{aligned}
 \end{equation*}
 Hence, for a.e. $\omega\in\Omega$, for all $u$ such that  $\tau(\omega)\leq u\leq T$, 
\begin{equation*}
\begin{aligned}
 \I_AY_{u}&=\I_A\zeta+\int_{u}^T \I_Ag(s,\I_A Y_s,\I_A Z_s, \I_Ak_s)\I_{]\tau,T ]}(s)ds -\int_{u}^T \I_AZ_sdW_s \\
 &\quad-\int_{u}^T\int_E  \I_Ak_s(e) \tilde N(ds,de).
 \end{aligned}
 \end{equation*}
  
From this and the uniqueness of the solution of the BSDE with standard parameters, we get that the triple  
$(\I_AY, \I_AZ, \I_Ak)$ is the unique solution on $[\tau,T]$ of the BSDE with standard parameters $(\I_A\zeta, g \I_A\I_{]\tau,T ]}).$ In terms of $g$-expectations we can thus write the following:
$\I_A \ce^{g}_{\tau,T}(\zeta)= \ce^{ g\I_A }_{\tau,T}(\I_A\zeta)$, where we have used the notational convention of Remark \ref{Rmk_measurability}. 
\end{proof}
{\bf Proof of Proposition \ref{prop_smallest_supermartingale}:}
From the definition of $(U_k)$, we get $U_k\geq \xi_k$, for all $k\in\{0,1,\ldots, T\}$ and 
$U_k\geq \ce^{g}_{k,k+1}(\xi_{k+1}),$ for all $k\in\{0,1,\ldots, T-1\}.$ Hence, the sequence $(U_k)$  is a $g$-supermartingale in discrete time dominating the sequence $(\xi_k)$. Let $(\tilde U_k)_{k\in\{0,1,\ldots, T\}}$ be a $g$-supermartingale in discrete time dominating the sequence $(\xi_k)$. We show that $\tilde U_k\geq U_k,$ for all 
$k\in\{0,1,\ldots, T\},$ by backward induction. At time $T$ we have $\tilde U_T\geq \xi_T= U_T.$ We suppose that
$\tilde U_{k+1}\geq U_{k+1}$.  By using the $g$-supermartingale property of $\tilde U$, the induction hypothesis and the monotonicity property of the operator $\ce^{g}_{k,k+1}(\cdot)$, we get 
$\tilde U_{k}\geq \ce^{g}_{k,k+1}(\tilde U_{k+1})\geq \ce^{g}_{k,k+1}(U_{k+1}).$ On the other hand, $\tilde U_{k}\geq \xi_{k}$ by definition of $\tilde U$. Thus, we get $\tilde U_{k}\geq \max\big(\xi_k; \ce^{g}_{k,k+1}(U_{k+1})\big)$. We conclude by recalling that  the right-hand side of the previous inequality is equal to $U_k$.  

\end{document}